\documentclass[10pt,a4paper,fleqn]{article}

\usepackage[T1]{fontenc}
\usepackage[utf8]{inputenc}
\usepackage[english]{babel}

\usepackage[a4paper,margin=3cm]{geometry}

\usepackage{amsmath,amssymb,amsthm,mathtools}
\usepackage{mathrsfs}
\usepackage{bm}

\usepackage{xcolor}
\usepackage{graphicx}

\usepackage[numbers,sort&compress]{natbib}

\usepackage[colorlinks=true,citecolor=blue,linkcolor=blue,urlcolor=blue]{hyperref}
\usepackage[nameinlink,capitalize,noabbrev]{cleveref}

\usepackage{aliascnt}

\newtheorem{theorem}{Theorem}[section]

\newaliascnt{proposition}{theorem}
\newtheorem{proposition}[proposition]{Proposition}
\aliascntresetthe{proposition}

\newaliascnt{lemma}{theorem}

\aliascntresetthe{lemma}

\newaliascnt{corollary}{theorem}
\newtheorem{corollary}[corollary]{Corollary}
\aliascntresetthe{corollary}

\theoremstyle{definition}

\newaliascnt{definition}{theorem}
\newtheorem{definition}[definition]{Definition}
\aliascntresetthe{definition}

\newaliascnt{example}{theorem}
\newtheorem{example}[example]{Example}
\aliascntresetthe{example}

\newaliascnt{construction}{theorem}

\aliascntresetthe{construction}

\newaliascnt{notation}{theorem}

\aliascntresetthe{notation}

\newaliascnt{remark}{theorem}
\newtheorem{remark}[remark]{Remark}
\aliascntresetthe{remark}

\Crefname{theorem}{Theorem}{Theorems}

\Crefname{proposition}{Proposition}{Propositions}

\Crefname{lemma}{Lemma}{Lemmas}

\Crefname{corollary}{Corollary}{Corollaries}

\Crefname{definition}{Definition}{Definitions}

\Crefname{example}{Example}{Examples}

\Crefname{construction}{Construction}{Constructions}

\Crefname{notation}{Notation}{Notations}

\Crefname{remark}{Remark}{Remarks}

\newcommand{\R}{\mathbb{R}}
\newcommand{\C}{\mathbb{C}}
\newcommand{\CP}{\mathbb{CP}}
\newcommand{\Hbb}{\mathbb{H}}
\newcommand{\HC}{\mathbb{H}_{\C}}

\newcommand{\Sp}{\operatorname{Sp}}
\newcommand{\Spin}{\operatorname{Spin}}
\newcommand{\re}{\operatorname{Re}}
\newcommand{\im}{\operatorname{Im}}

\newcommand{\ii}{\mathrm{i}}
\newcommand{\dd}{\mathrm{d}}
\newcommand{\dz}{\mathrm{d}z}

\newcommand{\inner}[2]{\langle #1,#2\rangle}
\newcommand{\norm}[1]{\left|#1\right|}

\newcommand{\NullCone}{\mathcal{A}}

\title{Complex Quaternions and Superminimal Surfaces in Four-Space}

\author{
Amedeo Altavilla\thanks{Dipartimento di Matematica, Universit\`a degli Studi di Bari Aldo Moro, Via E. Orabona 4, 70125 Bari, Italy. Email: \href{mailto:amedeo[.altavilla@uniba.it](mailto:.altavilla@uniba.it)}{[amedeo.altavilla@uniba.it](mailto:amedeo.altavilla@uniba.it)}. Corresponding author.}
\and
Elena Giorgio\thanks{Dipartimento di Matematica, Universit\`a degli Studi di Bari Aldo Moro, Via E. Orabona 4, 70125 Bari, Italy. Email: \href{mailto:e[.giorgio5@studenti.uniba.it](mailto:.giorgio5@studenti.uniba.it)}{[e.giorgio5@studenti.uniba.it](mailto:e.giorgio5@studenti.uniba.it)}.}
}

\date{}

\begin{document}

\maketitle

\begin{abstract}
We develop a quaternionic approach to conformal superminimal surfaces
in Euclidean four-space. The starting point is the classical
Weierstrass representation: every conformal minimal immersion
$X\colon M\to\R^4$ is recovered as
$X = c + \re\int\Phi,\dz$,
where $\Phi$ is a holomorphic null curve in $\C^4$, identified with
the algebra of complex quaternions $\HC$. The multiplicativity of the
quaternionic symmetrized norm makes it natural to factor null curves as
$\Phi=ALB$, where $A$ and $B$ are holomorphic maps of unit symplectic
norm and $L$ is a holomorphic null element. We show that on simply
connected domains the null factor can always be taken constant.

The main result is an explicit quaternionic reformulation of the
superminimality condition --- the requirement that the curvature
ellipse be a circle at every point. In the fixed-null gauge
$L=1+\ii e_1$, superminimality is equivalent to the vanishing of a
product of two holomorphic functions built from the left and right
Maurer--Cartan forms of $A$ and $B$. On a connected domain this
forces one of the two components of the generalized Gauss map
$[\Phi]\colon M\to Q^2\simeq\CP^1\times\CP^1$ to be constant,
recovering in spinorial terms the classical ruling condition on the
projective null quadric.

We further provide a first-order ODE parametrization of the
superminimal $ALB$-data, analyse the residual gauge freedom, prove a
fixed-gauge rigidity statement for polynomial spinorial factors, and
illustrate the theory with explicit examples.
\end{abstract}

\medskip

\noindent\textbf{Keywords.}
superminimal surfaces;
minimal surfaces in four-space;
holomorphic null curves;
complex quaternions;
spinorial factorization;
curvature ellipse;
generalized Gauss map.

\medskip

\noindent\textbf{2020 Mathematics Subject Classification.}
53A10; 53C42; 30G35; 22E10.

\medskip

\section{Introduction}
\label{sec:introduction}

Superminimal surfaces form a distinguished class of submanifolds within four-dimensional
Riemannian geometry.  In an oriented four-manifold they may be
characterised by the circularity of the curvature ellipse, or
equivalently by the condition that the associated Gauss map lies in one
of the two rulings of the projective null quadric.  The terminology was
introduced by Bryant in his study of conformal immersions into the
four-sphere, though the underlying geometry goes back to earlier work of
Kommerell and was subsequently studied by Eisenhart, Bor\r{u}vka,
Calabi, Chern, and others (see
\cite{Bryant1982,Friedrich1997,Forstneric2020CY,AlarconForstnericLarusson2021}).
We work in Euclidean four-space~$\R^4$, where a superminimal surface is
a conformal minimal immersion whose curvature ellipse is a circle at
every point.

Let $M\subset\C$ be a simply connected domain.  We recall that every conformal minimal
immersion $X\colon M\to\R^4$ is given by the Weierstrass representation
\[
  X = c + \re\int\Phi\,\dz,
\]
where $\Phi=2X_z$ is a nowhere-vanishing holomorphic null curve, that
is, a holomorphic map $\Phi\colon M\to\C^4$ satisfying
$\Phi^s:=\inner{\Phi}{\Phi}=0$ (see
\cite{Osserman1986,HoffmanOsserman1980,AlarconForstnericLopez2021}).
The superminimality condition admits a concise reformulation in these
terms: let $\Phi'$ denotes the complex derivative of $\Phi$, then $X$ is superminimal if and only if $(\Phi')^s=0$.  Projectively,
this means that the generalised Gauss map
$[\Phi]\colon M\to Q^2\simeq\CP^1\times\CP^1$ lies in a fibre of one
of the two projections, i.e., at least one of the two components
$g_1,g_2\colon M\to\CP^1$ of $[\Phi]$ is constant.  This is the
Euclidean counterpart of Bryant's twistor correspondence for
superminimal surfaces in~$S^4$, extended to general oriented
four-manifolds by Friedrich; see
\cite{Bryant1982,EellsSalamon1985,Friedrich1997}.

The purpose of this paper is to rewrite the above picture in the
language of complex quaternions.  We identify
$\C^4\simeq\HC:=\Hbb\otimes\C$, equipping it with the symmetrised norm
$q^s=qq^c$.  It holds that
$(qr)^s=q^sr^s$.  This makes it natural to factor any holomorphic null
curve as
\[
  \Phi = ALB,
\]
where $A,B\colon M\to\Sp(1,\C)\simeq SL(2,\C)$ are holomorphic maps of
unit symplectic norm and $L$ is a holomorphic null element.  We prove
(\Cref{thm:global-ALB-representation}) that on simply connected domains
the null factor $L$ may always be taken constant --- the
\emph{fixed-null gauge} --- so the representation captures all
conformal minimal immersions without loss of generality.  

\smallskip
\noindent\textbf{Main results.}
Differentiating $\Phi=ALB$ and introducing the left and right
Maurer--Cartan forms $\alpha=A^cA'$ and $\beta=B'B^c$ of the spinorial
factors, the condition $(\Phi')^s=0$ reduces, in the fixed-null gauge
$L=1+\ii e_1$, to the factored product condition
\[
  (\alpha_2+\ii\alpha_3)(\beta_2-\ii\beta_3)=0,
\]
where $\alpha_k$, $\beta_k$ are the $e_k$-components of $\alpha$ and
$\beta$ (\Cref{thm:superminimal-ALB-normal-form}).  On a connected
domain, one of the two factors must vanish identically, corresponding
to the two rulings of $Q^2\simeq\CP^1\times\CP^1$.  Each alternative
admits an explicit first-order ODE parametrisation inside a solvable
two-dimensional Lie algebra (\Cref{prop:ODE-parametrization}),
reminiscent of first-order descriptions of twistor lifts of
superminimal surfaces in spheres \cite{ChiFernandezWu1999}.  For
polynomial spinorial factors, the same condition implies a gauge
rigidity: modulo the exact stabiliser of~$L$, one of $A$ or $B$ may be
chosen constant (\Cref{prop:polynomial-normal-form}).

\smallskip
\noindent\textbf{Organisation of the paper.}
\Cref{sec:preliminaries} recalls conformal minimal immersions in~$\R^4$,
the curvature ellipse, and the superminimality criterion $(\Phi')^s=0$,
together with the projective null quadric and the generalised Gauss map.
\Cref{sec:complex-quaternions} introduces complex quaternions and the
$ALB$-representation.
\Cref{sec:superminimal-ALB} derives the moving and fixed-gauge
superminimality conditions, proves the factored formula, and provides
the ODE parametrisation and the associate family.
\Cref{sec:polynomial} contains the polynomial rigidity statement.
\Cref{sec:examples} gives explicit examples.

\section{Preliminaries on Minimal and Superminimal Surfaces}
\label{sec:preliminaries}

This section fixes the differential-geometric conventions used
throughout the paper. We recall conformal minimal surfaces in $\R^4$
and the characterisation of superminimality via the additional nullity
condition $(\Phi')^s=0$ on the associated holomorphic null curve, and
the projective interpretation through the generalised Gauss map
$[\Phi]\colon M\to Q^2\simeq\CP^1\times\CP^1$, whose two rulings
correspond to the two possible superminimal orientations. Standard
references for minimal surfaces in Euclidean spaces are
\cite{Osserman1986,HoffmanOsserman1980}; for superminimal surfaces and
their twistor interpretation in four-dimensional space forms we refer
to
\cite{Bryant1982,EellsSalamon1985,Friedrich1997,DajczerTojeiro2009,
Forstneric2020CY}.

\subsection{Conformal immersions}
\label{subsec:conformal-immersions}

Throughout this paper $M\subset\C$ denotes a connected and simply
connected domain with complex coordinate $z=u+\ii v$, and
$X\colon M\to\R^4$ a smooth immersion. We write
\[
  X_z := \tfrac{1}{2}(X_u - \ii X_v),
  \qquad
  X_{\bar z} := \tfrac{1}{2}(X_u + \ii X_v),
\]
and extend the Euclidean inner product of $\R^4$ $\C$-bilinearly to
$\C^4$.
{The \emph{first fundamental form} of the immersion is the metric
induced on $M$ by the Euclidean metric of $\R^4$. In the local
coordinates $(u,v)$ it is
$$
  \mathrm{I}
  =
  E\,\dd u^2+2F\,\dd u\,\dd v+G\,\dd v^2,
$$
where
$E:=\inner{X_u}{X_u},
  F:=\inner{X_u}{X_v},
  G:=\inner{X_v}{X_v}.
$}

The immersion $X$ is called \emph{conformal} if
$E=G$ and $F=0$.
Writing $e^{2\omega}:=\inner{X_u}{X_u}$, conformality is equivalent to
\begin{equation}
\label{eq:conformality-complex}
  \inner{X_z}{X_z}=0,
  \qquad
  \inner{X_z}{X_{\bar z}}=\tfrac{1}{2}e^{2\omega}>0.
\end{equation}
The first condition says that $\Phi:=2X_z$ is a null vector in $\C^4$;
the second is the immersion condition.

\subsection{The second fundamental form and the curvature ellipse}
\label{subsec:curvature-ellipse}

For $p\in M$, let $N_pM:=(\dd X_p(T_pM))^\perp\subset\R^4$ be the
normal plane. The \emph{second fundamental form} is the symmetric
bilinear map
\[
  \mathrm{II}\colon T_pM\times T_pM\longrightarrow N_pM,
  \qquad
  \mathrm{II}(V,W):=(\nabla_V W)^\perp,
\]
where $\nabla$ is the ambient Euclidean connection and
$(\,\cdot\,)^\perp$ denotes projection onto $N_pM$. 
{
In the Euclidean situation considered here, the ambient connection is
the ordinary derivative in $\R^4$. Hence the definition becomes very
concrete. Writing $(x_1,x_2)=(u,v)$, one has
$$
  \mathrm{II}(X_{x_i},X_{x_j})
  =
  \bigl(X_{x_i x_j}\bigr)^\perp.
$$
Equivalently,
$$
  \mathrm{II}(X_u,X_u)=\bigl(X_{uu}\bigr)^\perp,
  \qquad
  \mathrm{II}(X_u,X_v)=\bigl(X_{uv}\bigr)^\perp,
  \qquad
  \mathrm{II}(X_v,X_v)=\bigl(X_{vv}\bigr)^\perp.
$$
Thus the second fundamental form records the normal part of the second
derivatives of the parametrization.

Since the surface lies in $\R^4$, the normal space $N_pM$ is
two-dimensional. Consequently, the coefficients of $\mathrm{II}$ are
 vectors in the normal plane. If
$(n_1,n_2)$ is a local orthonormal frame of the normal bundle, then
$$
  \mathrm{II}(X_{x_i},X_{x_j})
  =
  h_{ij}^1 n_1+h_{ij}^2 n_2,
$$
where
$$
  h_{ij}^\alpha
  :=
  \inner{X_{x_i x_j}}{n_\alpha},
  \qquad
  \alpha=1,2.
$$
Therefore $\mathrm{II}$ may be viewed as a pair of scalar symmetric
forms, one for each normal direction.
}

In the conformal
frame $\varepsilon_1:=e^{-\omega}X_u$, $\varepsilon_2:=e^{-\omega}X_v$
one has
\begin{equation}
\label{eq:II-conformal}
  \mathrm{II}(\varepsilon_i,\varepsilon_j)
  \;=\;
  e^{-2\omega}(X_{x_ix_j})^\perp,
  \qquad (x_1,x_2):=(u,v).
\end{equation}
The \emph{mean curvature vector} is the trace
\begin{equation}
\label{eq:mean-curvature}
  \mathbf{H}
  \;:=\;
  \tfrac{1}{2}\bigl[
    \mathrm{II}(\varepsilon_1,\varepsilon_1)
    +\mathrm{II}(\varepsilon_2,\varepsilon_2)
  \bigr]
  \;=\;
  \tfrac{1}{2}e^{-2\omega}
  \bigl(X_{uu}+X_{vv}\bigr)^\perp.
\end{equation}

\begin{remark}
\label{rem:minimal-harmonic}
In conformal coordinates the tangential component of
$X_{uu}+X_{vv}$ vanishes identically, so
$X_{uu}+X_{vv}=2e^{2\omega}\mathbf{H}$. Hence $X$ is
\emph{minimal} (i.e.\ $\mathbf{H}=0$) if and only if $X$ is
\emph{harmonic}:
\begin{equation}
\label{eq:minimal-harmonic}
  X_{uu}+X_{vv}=0,
\end{equation}
Thus, in conformal coordinates, minimal immersions are precisely
harmonic conformal immersions.
\end{remark}

Setting
\begin{equation}
\label{eq:xi-eta}
  \xi
  \;:=\;
  \frac{\mathrm{II}(\varepsilon_1,\varepsilon_1)
        -\mathrm{II}(\varepsilon_2,\varepsilon_2)}{2},
  \qquad
  \eta \;:=\; \mathrm{II}(\varepsilon_1,\varepsilon_2),
\end{equation}
one has
$\mathrm{II}(\varepsilon_1,\varepsilon_1)=\mathbf{H}+\xi$ and
$\mathrm{II}(\varepsilon_2,\varepsilon_2)=\mathbf{H}-\xi$.
For a unit tangent vector
$w=\cos\theta\,\varepsilon_1+\sin\theta\,\varepsilon_2$,
bilinearity and symmetry of $\mathrm{II}$ give
\[
  \mathrm{II}(w,w)
  =
  \cos^2\theta\,\mathrm{II}(\varepsilon_1,\varepsilon_1)
  +2\cos\theta\sin\theta\,\mathrm{II}(\varepsilon_1,\varepsilon_2)
  +\sin^2\theta\,\mathrm{II}(\varepsilon_2,\varepsilon_2).
\]
Substituting the standard identities
$\cos^2\theta=\frac{1+\cos2\theta}{2}$,
$\sin^2\theta=\frac{1-\cos2\theta}{2}$,
$2\cos\theta\sin\theta=\sin2\theta$, and regrouping, we obtain
\begin{align*}
  \mathrm{II}(w,w)
  &=
  \frac{\mathrm{II}(\varepsilon_1,\varepsilon_1)
       +\mathrm{II}(\varepsilon_2,\varepsilon_2)}{2}
  +\cos(2\theta)\,
  \frac{\mathrm{II}(\varepsilon_1,\varepsilon_1)
       -\mathrm{II}(\varepsilon_2,\varepsilon_2)}{2}
  +\sin(2\theta)\,\mathrm{II}(\varepsilon_1,\varepsilon_2).
\end{align*}
Recognising $\mathbf{H}$, $\xi$, and $\eta$ from
\Cref{eq:mean-curvature,eq:xi-eta} gives
\begin{equation}
\label{eq:curvature-ellipse}
  \mathrm{II}(w,w)
  \;=\;
  \mathbf{H} + \xi\cos(2\theta) + \eta\sin(2\theta).
\end{equation}
As $\theta$ varies, $\mathrm{II}(w,w)$ traces an ellipse in
$N_pM$ centred at $\mathbf{H}$, called the
\emph{curvature ellipse} $\mathcal{E}_p$. It is a circle if and
only if
\begin{equation}
\label{eq:curvature-ellipse-circle}
  \inner{\xi}{\eta}=0
  \qquad\text{and}\qquad
  \norm{\xi}=\norm{\eta}.
\end{equation}

\begin{definition}
\label{def:superconformal-superminimal}
A conformal immersion $X\colon M\to\R^4$ is called
\emph{superconformal} if its curvature ellipse $\mathcal{E}_p$ is a
circle at every $p\in M$, and \emph{superminimal} if it is both
minimal and superconformal.
\end{definition}

The circularity condition \Cref{eq:curvature-ellipse-circle}
is related to the Wintgen inequality
$K+|K_N|\leq\norm{\mathbf{H}}^2$ for surfaces in $\R^4$, with
equality precisely in the superconformal case; see
\cite{Wintgen1979,GuadalupeRodriguez1983,DajczerTojeiro2009}.

\begin{remark}
\label{rem:superminimal-terminology}
Some authors use ``superminimal'' primarily for surfaces in $S^4$,
in connection with twistor theory. Here we follow the Euclidean
convention of \cite{DajczerTojeiro2009}.
\end{remark}
\subsection{Holomorphic null curves and the Weierstrass representation}
\label{subsec:holomorphic-null-curves}

Let $X\colon M\to\R^4$ be a conformal immersion and set
$\Phi:=2X_z=X_u-\ii X_v$. Writing $\Phi^s:=\inner{\Phi}{\Phi}$ for the
complex-bilinear square norm, \Cref{eq:conformality-complex} translates
into
\begin{equation}
\label{eq:Phi-null-and-nonzero}
  \Phi^s=0,
  \qquad
  \inner{\Phi}{\overline{\Phi}}>0,
\end{equation}
where the second condition is equivalent to $\Phi$ being nowhere zero.
Since $\Phi_{\bar{z}}=2X_{z\bar{z}}$, minimality of $X$ is equivalent
to holomorphicity of $\Phi$.

Conversely, every holomorphic map $\Phi\colon M\to\C^4$ satisfying
$\Phi^s=0$ and $\Phi\neq 0$ is called a \emph{holomorphic null curve}.
Since $M$ is simply connected, the holomorphic $1$-form $\Phi\,\dz$
has a primitive, and
\begin{equation}
\label{eq:minimal-immersion-from-null-curve}
  X(z) = c + \re\!\int_{z_0}^{z}\Phi(\zeta)\,\dd\zeta,
  \qquad c\in\R^4,
\end{equation}
defines a conformal minimal immersion. If zeros of $\Phi$ are allowed,
the same formula yields a branched conformal minimal surface.
\subsection{Superminimality in terms of
  \texorpdfstring{$\Phi'$}{Phi'}}
\label{subsec:superminimal-Phi-prime}

Throughout the paper, for a holomorphic map $\Phi$ we write
$\Phi':=\partial\Phi/\partial z$. We now express the circularity of the
curvature ellipse directly in terms of $\Phi'$.

Let $X\colon M\to\R^4$ be a conformal immersion. Since $X_z=\tfrac{1}{2}(X_u-\ii X_v)$, differentiating once more
gives
\[
  X_{zz}
  \;=\;
  \frac{1}{4}\bigl(X_{uu}-X_{vv}-2\ii X_{uv}\bigr).
\]
Taking normal components and applying \Cref{eq:II-conformal}:
\[
  (X_{zz})^\perp
  \;=\;
  \frac{1}{4}\bigl(X_{uu}^\perp - X_{vv}^\perp - 2\ii X_{uv}^\perp\bigr)
  \;=\;
  \frac{e^{2\omega}}{4}
  \bigl[
    \mathrm{II}(\varepsilon_1,\varepsilon_1)
    -\mathrm{II}(\varepsilon_2,\varepsilon_2)
    -2\ii\,\mathrm{II}(\varepsilon_1,\varepsilon_2)
  \bigr].
\]
Substituting the expressions from \Cref{eq:xi-eta} gives
\begin{equation}
\label{eq:Xzz-normal}
  (X_{zz})^\perp
  \;=\;
  \frac{e^{2\omega}}{2}\,(\xi-\ii\eta).
\end{equation}
Hence
\[
  \inner{(X_{zz})^\perp}{(X_{zz})^\perp}
  =
  \frac{e^{4\omega}}{4}
  \bigl(\norm{\xi}^2-\norm{\eta}^2-2\ii\inner{\xi}{\eta}\bigr).
\]
By \Cref{eq:curvature-ellipse-circle}, the curvature ellipse is a
circle if and only if
\begin{equation}
\label{eq:isotropic-Xzz-normal}
  \inner{(X_{zz})^\perp}{(X_{zz})^\perp}=0.
\end{equation}

\begin{proposition}
\label{prop:superminimal-null-curve-condition}
Let $X\colon M\to\R^4$ be a conformal minimal immersion and set
$\Phi:=2X_z$. Then $X$ is superminimal if and only if
\begin{equation}
\label{eq:superminimal-Phi-prime}
  (\Phi')^s=\inner{\Phi'}{\Phi'}=0.
\end{equation}
\end{proposition}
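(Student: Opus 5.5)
The plan is to reduce the statement to \Cref{eq:isotropic-Xzz-normal}, which was shown above to be equivalent to circularity of the curvature ellipse. Since $X$ is assumed minimal, superminimality is the same as superconformality. So it suffices to prove
\[
  \inner{\Phi'}{\Phi'} \;=\; 4\,\inner{(X_{zz})^\perp}{(X_{zz})^\perp},
\]
because $\Phi'=2X_{zz}$. To this end I will split $X_{zz}$ into tangential and normal parts, working in the complexified tangent space spanned by $X_z$ and $X_{\bar z}$. The normal part is taken with respect to the real orthogonal splitting $\R^4=T_pM\oplus N_pM$, extended $\C$-linearly.

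\textbf{Step 1: the tangential part of $X_{zz}$.} I will write $X_{zz}=aX_z+bX_{\bar z}+(X_{zz})^\perp$.
\begin{itemize}
\item Differentiating $\inner{X_z}{X_z}=0$ with respect to $z$ gives $\inner{X_{zz}}{X_z}=0$.
\item Minimality, via \Cref{rem:minimal-harmonic}, gives $X_{z\bar z}=0$. Differentiating $\inner{X_z}{X_{\bar z}}=\tfrac12 e^{2\omega}$ with respect to $z$ then gives $\inner{X_{zz}}{X_{\bar z}}=\omega_z e^{2\omega}$.
\end{itemize}
Pairing the decomposition with $X_z$ and $X_{\bar z}$, and using $\inner{X_z}{X_z}=\inner{X_{\bar z}}{X_{\bar z}}=0$, yields $b=0$ and $a=2\omega_z$. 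Hence
\[
  X_{zz}=2\omega_z X_z+(X_{zz})^\perp .
\]

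\textbf{Step 2: the complex square norm.} Squaring the decomposition bilinearly, the cross term vanishes by orthogonality of the normal part to $X_z$. The term $(2\omega_z)^2\inner{X_z}{X_z}$ vanishes by conformality. Therefore
\[
  \inner{X_{zz}}{X_{zz}}=\inner{(X_{zz})^\perp}{(X_{zz})^\perp}.
\]
Multiplying by $4$ gives $(\Phi')^s=4\inner{(X_{zz})^\perp}{(X_{zz})^\perp}$, and the proposition follows from \Cref{eq:isotropic-Xzz-normal}, which was derived from \Cref{eq:Xzz-normal} and \Cref{eq:curvature-ellipse-circle}.

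The main point to check is Step 1. The tangential component is nonzero in general, and it drops out of the square norm only because it is a multiple of the isotropic vector $X_z$, with no $X_{\bar z}$ component. The absence of an $X_{\bar z}$ component uses both conformality and harmonicity: without $X_{z\bar z}=0$ one would get an extra term. I will also verify that the bilinear extension of the orthogonal projection commutes with taking $\partial_z$-components, so that $(X_{zz})^\perp$ here agrees with the object in \Cref{eq:Xzz-normal}. This is immediate because the projection is real-linear and is applied componentwise to $X_{uu},X_{uv},X_{vv}$.
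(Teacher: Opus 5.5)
Your proof is correct and follows the paper's argument essentially step for step. You decompose $X_{zz}$ as $aX_z+bX_{\bar z}$ plus its normal part, kill $b$ using $\inner{X_{zz}}{X_z}=0$, deduce $\inner{\Phi'}{\Phi'}=4\inner{(X_{zz})^\perp}{(X_{zz})^\perp}$, and then invoke \Cref{eq:isotropic-Xzz-normal}.

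One small correction to your closing commentary: $b=0$ uses only the $z$-derivative of $\inner{X_z}{X_z}=0$, i.e.\ conformality, and the value $a=2\omega_z$ is never needed. Moreover, $X_{z\bar z}$ is already normal for any conformal immersion (\Cref{rem:minimal-harmonic}), so dropping harmonicity would not create an extra term; minimality is used only to identify superminimal with superconformal and to make $\Phi$ holomorphic.
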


\begin{proof}
Differentiating $\inner{X_z}{X_z}=0$ gives $\inner{X_{zz}}{X_z}=0$.
Writing $(X_{zz})^\top=aX_z+bX_{\bar z}$, the condition
$\inner{X_{zz}}{X_z}=0$ forces $b=0$, since $\inner{X_z}{X_z}=0$ and
$\inner{X_{\bar z}}{X_z}>0$. Thus $(X_{zz})^\top=aX_z$, and the
nullity of $X_z$ gives
$\inner{X_{zz}}{X_{zz}}=\inner{(X_{zz})^\perp}{(X_{zz})^\perp}$.
By minimality, $\Phi=2X_z$ is holomorphic, so $\Phi'=2X_{zz}$ and
\[
  \inner{\Phi'}{\Phi'}
  =
  4\inner{(X_{zz})^\perp}{(X_{zz})^\perp}.
\]
This vanishes if and only if the curvature ellipse is a circle, by
\Cref{eq:isotropic-Xzz-normal}.
\end{proof}

Thus a nowhere-vanishing holomorphic null curve $\Phi\colon M\to\C^4$
defines a superminimal conformal minimal immersion $X=\re\int\Phi\,\dz$
if and only if
\begin{equation}
\label{eq:superminimal-summary}
  \Phi^s=0,
  \qquad
  \Phi\neq0,
  \qquad
  (\Phi')^s=0.
\end{equation}
Allowing zeros of $\Phi$ yields branched superminimal surfaces.

\subsection{The projective null quadric and the Gauss map}
\label{subsec:projective-null-quadric-prelim}

The superminimality condition \Cref{eq:superminimal-summary} has a
clean projective interpretation; see
\cite{HoffmanOsserman1980,Osserman1986} for the generalised Gauss map
of minimal surfaces and
\cite{Bryant1982,EellsSalamon1985,DajczerTojeiro2009} for
superminimality and its twistor interpretation.

The \emph{projective null quadric} is
{
the projectivisation of the non-zero null vectors for the
complex-bilinear scalar product on $\C^4$, namely
$$
  Q^2
  :=
  \bigl\{[\xi]\in\CP^3:\inner{\xi}{\xi}=0\bigr\}.
$$
Writing $\xi=(\xi_0,\xi_1,\xi_2,\xi_3)$, this condition becomes
$
  \inner{\xi}{\xi}
  =
  \xi_0^2+\xi_1^2+\xi_2^2+\xi_3^2=0,
$
so equivalently
}
\[
  Q^2 := \{[\xi]\in\CP^3:\xi_0^2+\xi_1^2+\xi_2^2+\xi_3^2=0\}.
\]
We identify $Q^2\simeq\CP^1\times\CP^1$ via the Segre parametrisation
\[
\begin{aligned}
  \sigma\colon\CP^1\times\CP^1&\longrightarrow Q^2\subset\CP^3,\\
  \bigl([s_0:s_1],[t_0:t_1]\bigr)
  &\longmapsto
  \bigl[s_0t_0+s_1t_1,\;\ii(s_0t_0-s_1t_1),\;
        s_1t_0-s_0t_1,\;-\ii(s_1t_0+s_0t_1)\bigr],
\end{aligned}
\]
whose image lies in $Q^2$ by direct computation. The two rulings of
$Q^2$ are the images of the fibres of the two projections
$\CP^1\times\CP^1\to\CP^1$.

Let $X\colon M\to\R^4$ be a conformal minimal immersion with associated
null curve $\Phi=2X_z$. Since $\Phi\neq 0$, the projectivisation
$[\Phi]\colon M\to Q^2$ is well defined; this is the \emph{generalised
Gauss map} of $X$, realising the identification
$G_2^+(\R^4)\simeq Q^2\simeq\CP^1\times\CP^1$. We denote its two
local components by $g_1,g_2\colon M\to\CP^1$.

In the affine chart $s_0t_0\neq 0$, set $g_1=s_1/s_0$ and $g_2=t_1/t_0$;
the Segre parametrisation gives
$[\Phi]=[1+g_1g_2,\;\ii(1-g_1g_2),\;g_1-g_2,\;-\ii(g_1+g_2)]$,
so the null curve takes the form
\begin{equation}
\label{eq:null-cone-parametrization}
  \Phi = \mu\bigl(1+g_1g_2,\;\ii(1-g_1g_2),\;g_1-g_2,\;-\ii(g_1+g_2)\bigr),
\end{equation}
where $\mu=(\Phi_0-\ii\Phi_1)/2$ is holomorphic and nowhere zero.
Outside this affine chart, $g_1$ and $g_2$ may become meromorphic,
with poles compensated by zeros of $\mu$.

We now show that
\begin{equation}
\label{eq:Phi-prime-g1-g2}
  \inner{\Phi'}{\Phi'} = -4\mu^2\,g_1'g_2'.
\end{equation}
{
Indeed, set
$$
  V(g_1,g_2)
  :=
  \bigl(
    1+g_1g_2,\;
    \ii(1-g_1g_2),\;
    g_1-g_2,\;
    -\ii(g_1+g_2)
  \bigr),
$$
so that $\Phi=\mu V$. A direct computation gives
$\inner{V}{V}=0$.
Moreover,
$$
  V_{g_1}
  =
  \bigl(g_2,\;-\ii g_2,\;1,\;-\ii\bigr),
  \qquad
  V_{g_2}
  =
  \bigl(g_1,\;-\ii g_1,\;-1,\;-\ii\bigr).
$$
Hence
$
  \inner{V_{g_1}}{V_{g_1}}=0=
  \inner{V_{g_2}}{V_{g_2}}
$
and
$$
\begin{aligned}
  \inner{V_{g_1}}{V_{g_2}}
  &=
  g_1g_2+(-\ii g_2)(-\ii g_1)-1+(-\ii)(-\ii) \\
  &=
  g_1g_2-g_1g_2-1-1 \\
  &=
  -2.
\end{aligned}
$$
Since $\inner{V}{V}=0$, differentiating with respect to $g_1$ and
$g_2$ also gives
$
  \inner{V}{V_{g_1}}=0=
  \inner{V}{V_{g_2}}$.
Now
$$
  \Phi'
  =
  \mu'V+\mu\bigl(g_1'V_{g_1}+g_2'V_{g_2}\bigr).
$$
Using the identities above, all terms containing $\mu'V$ vanish in
the square norm, and therefore
$$
\begin{aligned}
  \inner{\Phi'}{\Phi'}
  &=
  \mu^2
  \inner{
    g_1'V_{g_1}+g_2'V_{g_2}
  }{
    g_1'V_{g_1}+g_2'V_{g_2}
  } \\
  &=
  2\mu^2 g_1'g_2' \inner{V_{g_1}}{V_{g_2}} \\
  &=
  -4\mu^2 g_1'g_2'.
\end{aligned}
$$
}
The superminimality condition $(\Phi')^s=0$ is therefore locally
equivalent to $g_1'g_2'=0$. Since $M$ is connected and $g_1',g_2'$ are
holomorphic, the identity principle gives
\begin{equation}
\label{eq:superminimal-gauss}
  g_1'\equiv0 \qquad\text{or}\qquad g_2'\equiv0.
\end{equation}
Geometrically, $[\Phi]$ lies in a fibre of one of the two projections:
one component of the generalised Gauss map is constant. For non-planar
surfaces, exactly one component is constant; the two alternatives
correspond to the two possible orientations of the circular curvature
ellipse, and are the geometric source of the dichotomy in the
$ALB$-representation. The relation with twistor orientations is
classical; see \cite{Bryant1982,EellsSalamon1985,Friedrich1997}.

\begin{proposition}[Local normal form of superminimal null curves]
\label{prop:classical-superminimal-normal-form}
Let $M\subset\C$ be connected and let $\Phi\colon M\to\C^4$ be a
nowhere-vanishing holomorphic null curve whose associated immersion
$X=\re\int\Phi\,\dz$ is superminimal. Then, locally on $M$ and after a
constant rotation in $SO(4,\C)$, the null curve takes one of the two
forms
\[
  \Phi=\mu(1,\ii,g,-\ii g)
  \qquad\text{or}\qquad
  \Phi=\mu(1,\ii,g,\ii g),
\]
where $\mu$ is holomorphic and nowhere zero and $g$ is holomorphic.
Conversely, every null curve of either form defines a local superminimal
immersion. If $M$ is simply connected, or more generally if the real
periods of $\Phi\,\dz$ vanish, the immersion is global.
\end{proposition}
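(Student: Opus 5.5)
We use the local parametrisation \Cref{eq:null-cone-parametrization} together with the dichotomy \Cref{eq:superminimal-gauss}. Fix $p\in M$. After a constant rotation in $SO(4,\C)$ (in fact in $SO(4,\R)$, which acts transitively on $Q^2$), we may assume $[\Phi(p)]$ lies in the affine chart $s_0t_0\neq0$. Then near $p$ we have $\Phi=\mu V(g_1,g_2)$ with $\mu$ holomorphic and nowhere zero and $g_1,g_2$ holomorphic. By \Cref{prop:superminimal-null-curve-condition} and \Cref{eq:Phi-prime-g1-g2}, superminimality gives $\mu^2g_1'g_2'\equiv0$. Hence $g_1'g_2'\equiv0$, and by the identity principle one of $g_1,g_2$ is constant on the (connected) neighbourhood.

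\textbf{Case $g_2\equiv b$.} We have
\[
V=\bigl(1+bg_1,\ \ii(1-bg_1),\ g_1-b,\ -\ii(g_1+b)\bigr)=V(0,b)+g_1V_{g_1}(b),
\]
where $V_{g_1}=(b,-\ii b,1,-\ii)$ is constant. The constant null vectors $u:=V(0,b)$ and $w:=V_{g_1}$ satisfy $\inner{u}{w}=\inner{V}{V_{g_1}}|_{g_1=0}=0$, and they are linearly independent. So they span a totally null $2$-plane of the same family as $\mathrm{span}\{(1,\ii,0,0),(0,0,1,-\ii)\}$; this is the $g_2=0$ ruling, for which $V=(1,\ii,g_1,-\ii g_1)$. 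The main step is to produce $R\in SO(4,\C)$ with $Ru=(1,\ii,0,0)$ and $Rw=(0,0,1,-\ii)$. We do this by extending each pair to a Witt basis $u,w,u',w'$ with $\inner{u}{u'}=\inner{w}{w'}=1$ and all other pairings zero, and mapping one basis to the other. The resulting map is orthogonal. It has determinant $\pm1$, and because both planes belong to the same ruling, one can arrange determinant $+1$. Rather than tracking orientations abstractly, we can avoid the issue by noting that $b\mapsto$ the plane is the $g_2$-fibre and composing with the explicit $SO(4,\C)$ action on the second $\CP^1$ factor (the $SL(2,\C)$ factor of $SO(4,\C)\simeq (SL(2,\C)\times SL(2,\C))/\pm1$), which moves $b$ to $0$. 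This yields $\Phi=\mu(1,\ii,g,-\ii g)$ with $g=g_1$.

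\textbf{Case $g_1\equiv a$.} The same argument in the other factor moves $a$ to $0$, giving $V=(1,\ii,-g_2,-\ii g_2)$. Setting $g=-g_2$ gives $\Phi=\mu(1,\ii,g,\ii g)$.

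\textbf{Obstacle.} The main technical point is justifying that $SO(4,\C)$ acts on $Q^2\simeq\CP^1\times\CP^1$ through independent Möbius actions on the two factors, preserving each ruling. I would verify this by checking that the Segre map intertwines the action of $SL(2,\C)\times SL(2,\C)$ on pairs $(s,t)$ with a linear action on $\C^4$ preserving $\inner{\cdot}{\cdot}$ (for example via $\xi\mapsto$ the $2\times2$ matrix whose determinant is the quadratic form).

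\textbf{Converse and globalisation.} For $\Phi=\mu(1,\ii,g,\mp\ii g)$, direct computation gives $\Phi^s=\mu^2(1-1+g^2-g^2)=0$. Moreover $\Phi'=\mu'(\dots)+\mu g'(0,0,1,\mp\ii)$, and since all three vectors $(1,\ii,g,\mp\ii g)$, $(0,0,1,\mp\ii)$ are mutually orthogonal and null, $(\Phi')^s=0$. Since $\mu\neq0$, $\Phi\neq0$. Rotations in $SO(4,\C)$ preserve both $\inner{\cdot}{\cdot}$ and differentiation, so \Cref{prop:superminimal-null-curve-condition} gives local superminimality. Note that a complex rotation does not preserve realness of $\re\int\Phi$; the statement is about the null curve form, and superminimality is tested by \Cref{eq:superminimal-summary} on the original $\Phi$. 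Finally, when the real periods vanish (in particular on simply connected $M$), \Cref{eq:minimal-immersion-from-null-curve} defines $X$ globally, and the condition $(\Phi')^s=0$ is global.
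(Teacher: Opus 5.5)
Your proposal is correct and follows essentially the paper's route. You restrict to an affine Segre chart, use $\inner{\Phi'}{\Phi'}=-4\mu^2g_1'g_2'$ and the identity principle to make one Gauss component constant, move that constant to the base point of its $\CP^1$-factor via the $SL(2,\C)\times SL(2,\C)$ action (which the matrix model $\Psi$ confirms is factorwise), and check the converse by direct computation. The Witt-basis detour is unnecessary, since any orthogonal map sending a null plane to another of the same family automatically has determinant $+1$; your preliminary real rotation into the chart and your remark on $SO(4,\C)$-invariance of $(\Phi')^s$ are small refinements of steps the paper leaves implicit.
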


\begin{proof}
Restricting to an affine Segre chart, \Cref{eq:Phi-prime-g1-g2} gives
$g_1'g_2'\equiv 0$, hence by the identity principle $g_1'\equiv 0$ or
$g_2'\equiv 0$. A constant rotation in $SO(4,\C)$ moves the constant
component to the base point of its $\CP^1$-factor, yielding $g_2\equiv
0$ or $g_1\equiv 0$ in the Segre chart, which gives the two stated
forms.

Conversely, for either form a direct computation gives $\Phi^s=0$ and
$(\Phi')^s=0$, so \Cref{prop:superminimal-null-curve-condition} applies.
The immersion is global when $M$ is simply connected, or more generally
when the real periods of $\Phi\,\dz$ vanish.
\end{proof}

\begin{remark}
\label{rem:normal-form-classical}
This is the classical local normal form of superminimal null curves in
$\C^4$. The following sections rewrite the two ruling alternatives in
the $ALB$-language, expressing them as explicit conditions on the
Maurer--Cartan forms of the two spinorial factors.
\end{remark}
\section{Complex Quaternions and the \(ALB\)-Representation}
\label{sec:complex-quaternions}

In this section we recall the algebra of complex quaternions and use it
to express holomorphic null curves in $\C^4$ via factorizations
$\Phi=ALB$, where $A$ and $B$ are holomorphic maps with unit symmetrised
norm and $L$ is a holomorphic null factor. The multiplicativity of the
symmetrised norm is the basic algebraic mechanism behind the
representation. A variable null factor does not enlarge the local class
of null curves: on simply connected domains $L(z)$ can always be
absorbed into the spinorial factors, so the distinction between a moving
$L(z)$ and a fixed null element is a choice of gauge.

Quaternionic methods in conformal surface theory are developed in
\cite{BFLPP}; the complex-quaternionic approach to minimal surfaces
motivating the present work is \cite{AltavillaSchroeckerSirVrsek2025};
for quaternions in twistor descriptions of four-dimensional geometry
see also \cite{Bryant1982,AHS}.

\subsection{The algebra of complex quaternions}
\label{subsec:complex-quaternion-algebra}

Let $\HC:=\Hbb\otimes_\R\C$ be the algebra of complex quaternions, with
elements written as
\[
  q = q_0+q_1e_1+q_2e_2+q_3e_3, \qquad q_j\in\C,
\]
where $\ii$ commutes with the quaternionic units and
\[
  e_1^2=e_2^2=e_3^2=-1,
  \qquad
  e_1e_2=e_3,\quad e_2e_3=e_1,\quad e_3e_1=e_2.
\]
The \emph{quaternionic conjugate} of $q$ is
$q^c:=q_0-q_1e_1-q_2e_2-q_3e_3$, and the \emph{symmetrised norm} is
\begin{equation}
\label{eq:symmetrized-norm}
  q^s := qq^c = q^cq = q_0^2+q_1^2+q_2^2+q_3^2.
\end{equation}
Since the coefficients $q_j$ are complex, $q^s$ is a complex quadratic
form, not a positive-definite norm. We denote by $\re_\Hbb(q):=q_0$ the
scalar part of $q$; the space of \emph{pure complex quaternions} is
$\operatorname{Im}(\HC):=\{q\in\HC:q^c=-q\}=\C e_1\oplus\C e_2\oplus\C e_3$,
and every $q\in\HC$ splits as $q=q_0+\underline{q}$ with
$\underline{q}\in\operatorname{Im}(\HC)$.

The \emph{null cone} is $\NullCone:=\{q\in\HC:q^s=0\}$. The polar form
of $q\mapsto q^s$ is $\inner{q}{r}=q_0r_0+q_1r_1+q_2r_2+q_3r_3$, so
$\inner{q}{q}=q^s$. Under the identification
$q_0+q_1e_1+q_2e_2+q_3e_3\leftrightarrow(q_0,q_1,q_2,q_3)$ between
$\HC$ and $\C^4$, the symmetrised norm $q^s$ coincides with the
complex-bilinear square norm of \Cref{sec:preliminaries}, so the
notations $q^s$ and $\Phi^s$ are consistent.

The symmetrised norm is multiplicative:
\begin{equation}
\label{eq:symmetrized-norm-multiplicative}
  (qr)^s = q^sr^s, \qquad q,r\in\HC.
\end{equation}
Indeed,
$(qr)^s=qr(qr)^c=qrr^cq^c=q\,r^s\,q^c=r^sqq^c=q^sr^s$,
because $r^s\in\C$ is central.

We also use the $\C$-algebra isomorphism
$\Psi\colon\HC\to\operatorname{Mat}_{2\times 2}(\C)$ defined by
\begin{equation}
\label{eq:HC-matrix-model}
  \Psi(q_0+q_1e_1+q_2e_2+q_3e_3)
  :=
  \begin{pmatrix}
    q_0+\ii q_1 & q_2+\ii q_3\\
    -q_2+\ii q_3 & q_0-\ii q_1
  \end{pmatrix},
\end{equation}
which satisfies
\begin{equation}
\label{eq:det-Psi-symmetrized-norm}
  \det\Psi(q) = q^s.
\end{equation}
Consequently: $q\in\NullCone\setminus\{0\}$ if and only if $\Psi(q)$
has rank one; $q$ is invertible if and only if $q^s\neq 0$, with
$q^{-1}=q^c/q^s$; and $q^s=1$ if and only if $\Psi(q)\in SL(2,\C)$.
\subsection{The spin action and the null quadric}
\label{subsec:spin-action}

The group $\Sp(1,\C):=\{q\in\HC:q^s=1\}$, identified with $SL(2,\C)$
via $\Psi$, and its product $\Sp(1,\C)\times\Sp(1,\C)$ act on $\HC$
by two-sided multiplication $(A,B)\cdot q:=AqB$. By multiplicativity of
the symmetrised norm,
\begin{equation}
\label{eq:norm-multiplicativity}
  (AqB)^s = A^s\,q^s\,B^s.
\end{equation}
If $A^s=B^s=1$ the action preserves $q\mapsto q^s$ and hence the null
cone $\NullCone$. Replacing $B$ by $C^{-1}=C^c$, this recovers the
complex spin representation
$\Spin(4,\C)\simeq SL(2,\C)\times SL(2,\C)\to SO(4,\C)$.

Under $\Psi$, the projectivised null cone $Q^2$ is identified with the
variety of non-zero rank-one matrices in $\operatorname{Mat}_{2\times2}(\C)$.
Every such matrix writes as $uv^t$ with $u,v\in\C^2\setminus\{0\}$,
and the pair $([u],[v])\in\CP^1\times\CP^1$ determines its projective
class, recovering the Segre identification $Q^2\simeq\CP^1\times\CP^1$.
When $\Psi(q)=uv^t$ has rank one, left multiplication by $A\in\Sp(1,\C)$
acts on the column line $[u]$ and right multiplication by $B$ acts on
the row line $[v]$.

\begin{proposition}
\label{prop:transitivity-null-cone}
The action of $\Sp(1,\C)\times\Sp(1,\C)$ on $\NullCone\setminus\{0\}$
is transitive: for every two non-zero null elements $L,L_0\in\HC$
there exist $P,Q\in\Sp(1,\C)$ such that $L=PL_0Q$.
\end{proposition}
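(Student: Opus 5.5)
We pass to matrices and reduce the statement to a fact about rank-one $2\times 2$ matrices under the isomorphism $\Psi$ of \Cref{eq:HC-matrix-model}. By \Cref{eq:det-Psi-symmetrized-norm}, $\Sp(1,\C)$ corresponds exactly to $SL(2,\C)$. Non-zero null elements correspond exactly to rank-one matrices. The two-sided action $(P,Q)\cdot L_0=PL_0Q$ becomes $\Psi(P)\Psi(L_0)\Psi(Q)$. It therefore suffices to show that $SL(2,\C)\times SL(2,\C)$ acts transitively on rank-one matrices by $M\mapsto gMh$.

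Since the action is by a group, transitivity follows once every non-zero null element is carried to one fixed reference element, for which we take $E_{11}=\begin{pmatrix}1&0\\0&0\end{pmatrix}$. Write a rank-one matrix as $M=uv^t$ with $u,v\in\C^2\setminus\{0\}$.

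We first choose $g\in SL(2,\C)$ with $gu=e_1$. Complete $u$ to a basis $(u,w)$ of $\C^2$, rescale $w$ so that $\det(u\,|\,w)=1$, and put $g=(u\,|\,w)^{-1}$. In the same way we choose $h\in SL(2,\C)$ with $h^tv=e_1$, i.e.\ $v^th=e_1^t$. Then
\[
  gMh=(gu)(v^th)=e_1e_1^t=E_{11}.
\]
So every rank-one matrix lies in the orbit of $E_{11}$.

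Now let $L,L_0$ be non-zero null elements. Choose $g,h$ with $g\Psi(L_0)h=E_{11}$ and $g',h'$ with $g'\Psi(L)h'=E_{11}$. Then
\[
  \Psi(L)=(g')^{-1}g\,\Psi(L_0)\,h\,(h')^{-1}.
\]
Set $P:=\Psi^{-1}\bigl((g')^{-1}g\bigr)$ and $Q:=\Psi^{-1}\bigl(h(h')^{-1}\bigr)$. These lie in $\Sp(1,\C)$ because the determinants equal $1$. Since $\Psi$ is an algebra isomorphism, $L=PL_0Q$.

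The only real content is producing determinant-one matrices with a prescribed first column. This is the step where care is needed, and rescaling the complementary basis vector handles it. No scaling obstruction arises: rank-one matrices carry no invariant besides their rank, and any scalar factor is absorbed into the choice of $u$ and $v$.

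For a quaternionic phrasing one could instead take $L_0=1+\ii e_1$, for which $\Psi(L_0)=\operatorname{diag}(0,2)$, i.e.\ $2E_{22}$. The same argument then applies with target $2E_{22}$, using $gu=\sqrt2\,e_2$ and $v^th=\sqrt2\,e_2^t$.
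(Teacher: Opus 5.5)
Your proof is correct and follows essentially the same route as the paper. Both transfer the problem through $\Psi$ to rank-one matrices $uv^t$, then use transitivity of $SL(2,\C)$ on $\C^2\setminus\{0\}$ separately on the column and row factors. The differences are only cosmetic: you pass through the base point $E_{11}$ and prove the transitivity by completing $u$ (resp.\ $v$) to a unimodular basis, whereas the paper maps $u_0\mapsto u$ and $v_0\mapsto v$ directly and takes transitivity as known.
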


\begin{proof}
Via $\Psi$, it suffices to show that $SL(2,\C)\times SL(2,\C)$ acts
transitively on non-zero rank-one matrices. Write $M=uv^t$ and
$M_0=u_0v_0^t$ with $u,u_0,v,v_0\in\C^2\setminus\{0\}$. Since
$SL(2,\C)$ acts transitively on $\C^2\setminus\{0\}$, choose
$P,Q\in SL(2,\C)$ with $Pu_0=u$ and $v_0^tQ=v^t$. Then
\[
  PM_0Q = P(u_0v_0^t)Q = (Pu_0)(v_0^tQ) = uv^t = M.
\]
Transporting through $\Psi$ gives the result for $\Sp(1,\C)\times\Sp(1,\C)$.
\end{proof}

\subsection{The \(ALB\)-representation}
\label{subsec:ALB-representation}

\begin{definition}
\label{def:ALB-representation}
Let $M\subset\C$ be a domain. An \emph{$ALB$-representation} of a
holomorphic null curve $\Phi\colon M\to\HC$ is an expression $\Phi=ALB$,
where $A,B\colon M\to\Sp(1,\C)$ are holomorphic and
$L\colon M\to\HC\setminus\{0\}$ is a holomorphic null map, i.e.,
$L(z)^s=0$ for every $z\in M$. If $L$ is constant the representation
is called \emph{fixed-null}.
\end{definition}

\begin{proposition}
\label{prop:ALB-produces-null-curves}
Let $A,B\colon M\to\Sp(1,\C)$ be holomorphic and let
$L\colon M\to\HC\setminus\{0\}$ be holomorphic with $L^s=0$. Then
$\Phi:=ALB$ is a nowhere-vanishing holomorphic null curve, and
\[
  X = \re\int\Phi\,\dz
\]
defines a conformal minimal immersion $X\colon M\to\R^4$ on every simply
connected domain $M$.
\end{proposition}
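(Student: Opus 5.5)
The plan is to verify the three properties of $\Phi:=ALB$ in turn: holomorphicity, nullity, and non-vanishing. We then invoke the Weierstrass representation of \Cref{subsec:holomorphic-null-curves}.

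\emph{Holomorphicity.} Multiplication in $\HC$ is $\C$-bilinear, and each component of a product is a polynomial in the components of the factors. Hence $\Phi=ALB$ is holomorphic whenever $A$, $L$, $B$ are.

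\emph{Nullity.} By the multiplicativity \Cref{eq:norm-multiplicativity},
\[
  \Phi^s=(ALB)^s=A^s\,L^s\,B^s=1\cdot 0\cdot 1=0
\]
pointwise on $M$. Since $q^s$ coincides with the complex-bilinear square norm of \Cref{sec:preliminaries}, this gives $\inner{\Phi}{\Phi}=0$.

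\emph{Non-vanishing.} This is the only point needing an argument, since null elements are not invertible and the obvious approach fails. The elements $A(z)$ and $B(z)$ are invertible, because $A^s=B^s=1\neq0$, with $A^{-1}=A^c$ and $B^{-1}=B^c$. Therefore $L=A^c\Phi B^c$. If $\Phi(z)=0$, then $L(z)=0$, contradicting $L\colon M\to\HC\setminus\{0\}$. Equivalently, through $\Psi$: $\Psi(A)$ and $\Psi(B)$ lie in $SL(2,\C)$, so $\Psi(\Phi)$ has the same rank as $\Psi(L)$, which is one. Hence $\Phi$ is a nowhere-vanishing holomorphic null curve.

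\emph{Conclusion.} On a simply connected $M$, the holomorphic $1$-form $\Phi\,\dz$ has a primitive. By \Cref{eq:minimal-immersion-from-null-curve}, $X=\re\int\Phi\,\dz$ is well defined and satisfies $2X_z=\Phi$. Nullity of $\Phi$ gives conformality \Cref{eq:conformality-complex}. The condition $\Phi\neq0$ gives $\inner{\Phi}{\overline\Phi}>0$, which is the immersion condition. Holomorphicity of $\Phi$ gives harmonicity of $X$, hence minimality by \Cref{rem:minimal-harmonic}. No step is a genuine obstacle; the only care needed is the non-vanishing, which uses the invertibility of the unit factors rather than multiplicativity of the norm.
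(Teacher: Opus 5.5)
Your proof is correct and follows the paper's argument: holomorphicity of the product, nullity from multiplicativity of the symmetrised norm, non-vanishing from invertibility of $A$ and $B$ together with $L\neq 0$, and then the Weierstrass representation of \Cref{subsec:holomorphic-null-curves}. The paper states the non-vanishing step without detail; your identity $L=A^c\Phi B^c$ makes it explicit. Your checks of conformality, the immersion condition and harmonicity spell out what the paper leaves to that subsection.
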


\begin{proof}
The product $\Phi=ALB$ is holomorphic. By multiplicativity,
$\Phi^s=(ALB)^s=A^sL^sB^s=0$ since $L^s=0$. Since $A,B$ are
invertible and $L\neq 0$, we have $\Phi\neq 0$ everywhere. Thus $\Phi$
is a nowhere-vanishing holomorphic null curve, and the conclusion
follows from \Cref{subsec:holomorphic-null-curves}.
\end{proof}

Conversely, every nowhere-vanishing null curve has the tautological
representation $\Phi=1\cdot\Phi\cdot 1$, so the moving
$ALB$-representation is best understood as distributing the null curve
among two spinorial factors $A,B$ and a moving null factor $L$. The
next theorem shows that on simply connected domains the null factor can
always be absorbed into the spinorial factors.

\begin{theorem}[Fixed-null gauge]
\label{thm:global-ALB-representation}
Let $M\subset\C$ be a simply connected domain and let $\Phi\colon M\to\HC$
be holomorphic with $\Phi^s=0$ and $\Phi\neq 0$ everywhere. Fix a
non-zero null element $L_0\in\HC$. Then there exist holomorphic maps
$A,B\colon M\to\Sp(1,\C)$ such that $\Phi=AL_0B$.
\end{theorem}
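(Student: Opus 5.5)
By \Cref{prop:transitivity-null-cone} it suffices to treat one convenient normalisation of $L_0$. Indeed, if $\Phi=A\,L_1\,B$ with $L_1=PL_0Q$ for constants $P,Q\in\Sp(1,\C)$, then $\Phi=(AP)L_0(QB)$. So I will work in the matrix model $\Psi$ and take $\Psi(L_1)=\begin{pmatrix}1&0\\0&0\end{pmatrix}=e_1e_1^t$. The goal is then to find holomorphic $\hat A,\hat B\colon M\to SL(2,\C)$ with $\Psi(\Phi)=\hat A\,e_1e_1^t\,\hat B$. This equals $(\hat A e_1)(e_1^t\hat B)$, i.e.\ the first column of $\hat A$ times the first row of $\hat B$.

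First step: write the rank-one matrix $\Psi(\Phi)$ globally as $u v^t$ with $u,v\colon M\to\C^2\setminus\{0\}$ holomorphic. Since $\Psi(\Phi)$ has rank exactly one everywhere ($\Phi\neq0$, $\det=0$), its image defines a holomorphic line subbundle of the trivial bundle $M\times\C^2$. Every holomorphic line bundle on a domain in $\C$ is holomorphically trivial, since $H^1(M,\mathcal O^*)=0$ on open Riemann surfaces (Cousin II/Weierstrass). Hence there is a nowhere-vanishing holomorphic section $u$ spanning the image.

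Concretely, I would take $u$ from the columns of $\Psi(\Phi)$. Let $c_1,c_2$ be the two columns; they have no common zero. Let $h=\gcd$ of the entries of $c_1$ (by the Weierstrass theorem on $M$), and divide $c_1$ by $h$, so that $c_1/h$ never vanishes. Then the second column is $c_2=f\,(c_1/h)$, where $f$ is holomorphic on $M$ because it is meromorphic with poles that cancel. This gives $v=(h,f)$, which is nonvanishing since $\Phi\neq0$. This is the one genuinely analytic step and the main obstacle; it needs only the Weierstrass product theorem on $M$, not simple connectivity.

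Second step: complete $u$ and $v$ to $SL(2,\C)$-valued maps. Let $u=(u_1,u_2)^t$ have no common zeros. On a domain in $\C$ (a Stein, one-dimensional manifold) the Bézout identity holds: there are holomorphic $a,b$ with $au_1+bu_2=1$. This is again Weierstrass/Mittag-Leffler, or Cartan's Theorem B applied to the surjection $\mathcal O^2\to\mathcal O$, $(a,b)\mapsto au_1+bu_2$. Then
\[
\hat A=\begin{pmatrix}u_1&-b\\u_2&a\end{pmatrix}
\]
has determinant $1$ and first column $u$. Similarly $\hat B=\begin{pmatrix}v_1&v_2\\ -d&c\end{pmatrix}$ with $cv_1+dv_2=1$ has first row $v^t$. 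Then $\hat A e_1e_1^t\hat B=uv^t=\Psi(\Phi)$.

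Finally, pull $\hat A$ and $\hat B$ back through $\Psi$ to get $A,B\colon M\to\Sp(1,\C)$, and compose with the constants $P,Q$ to obtain the representation for the prescribed $L_0$. Simple connectivity is not really needed here beyond guaranteeing that $M$ is a planar domain. The only delicate points are the holomorphic triviality of the image line bundle and the corona-free Bézout identity on $M$. Both follow from the classical Weierstrass and Mittag-Leffler theorems on open subsets of $\C$, and I expect invoking them cleanly to be the crux.
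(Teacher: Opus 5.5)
Your proof is correct and follows essentially the paper's route: reduce via \Cref{prop:transitivity-null-cone} to a standard null element (the paper uses $L_*=1+\ii e_1$ with $\Psi(L_*)=2E_{22}$, whereas you use $E_{11}$), write the rank-one matrix as $uv^t$ using triviality of the image line bundle, and complete $u$ and $v$ to $SL(2,\C)$-frames via the holomorphic Bézout identity. Your remark that simple connectivity is not actually needed is right; in your concrete gcd construction, just take a column that is not identically zero (if $c_1\equiv0$, use $c_2$).
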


\begin{proof}
We first treat the standard null element $L_*:=1+\ii e_1$. Under $\Psi$
this corresponds to $\Psi(L_*)=2E_{22}$, where $E_{22}$ has $1$ in the
$(2,2)$-entry and zeros elsewhere.

Set $F:=\Psi\circ\Phi$. Since $\Phi^s=0$ and $\Phi\neq 0$, the matrix
$F(z)$ has rank one for every $z\in M$, so its image
$\operatorname{image}F(z)\subset\C^2$ is a complex line varying
holomorphically in $z$. These lines define a holomorphic line subbundle
of the trivial bundle $M\times\C^2$. Since $M$ is a contractible Stein
Riemann surface, every holomorphic line bundle on $M$ is trivial, so
this subbundle admits a nowhere-vanishing holomorphic section
$u\colon M\to\C^2$ with $\operatorname{image}F(z)=\C u(z)$ for all
$z\in M$. Write $u=(u_1,u_2)^t$; since $u$ is nowhere zero, $u_1$ and
$u_2$ have no common zeros.

Since every column of $F(z)$ lies in $\C u(z)$, there exists a
holomorphic row vector $v^t=(v_1,v_2)$ with $F=uv^t$; $v$ is nowhere
zero because $F$ has rank one everywhere.

We now complete $u$ to an $SL(2,\C)$-frame. Since $M$ is Stein and
$u_1,u_2$ have no common zeros, the holomorphic Bézout equation is
solvable on $M$ \cite[Ch.~VII]{Forster1981}; see also
\cite[Ch.~VIII]{GunningRossi1965}. Hence there exist holomorphic $a,b$
with $au_2-bu_1=1$. Setting
\[
  \widetilde{A}
  :=
  \begin{pmatrix} a & u_1 \\ b & u_2 \end{pmatrix},
\]
we get $\det\widetilde{A}=au_2-bu_1=1$, so
$\widetilde{A}\colon M\to SL(2,\C)$.

Likewise, $v_1,v_2$ have no common zeros, so there exist holomorphic
$c,d$ with $cv_2-dv_1=2$. Setting
\[
  \widetilde{B}
  :=
  \begin{pmatrix} c & d \\ v_1/2 & v_2/2 \end{pmatrix},
\]
we get $\det\widetilde{B}=(cv_2-dv_1)/2=1$, so
$\widetilde{B}\colon M\to SL(2,\C)$.

By construction the second column of $\widetilde{A}$ is $u$ and the
second row of $\widetilde{B}$ is $v^t/2$, hence
\[
  \widetilde{A}\,(2E_{22})\,\widetilde{B} = uv^t = F.
\]
Transporting back through $\Psi$ gives holomorphic
$A_*,B_*\colon M\to\Sp(1,\C)$ with $\Phi=A_*L_*B_*$.

For general $L_0$, \Cref{prop:transitivity-null-cone} gives constant
$P,Q\in\Sp(1,\C)$ with $L_0=PL_*Q$, so $L_*=P^{-1}L_0Q^{-1}$.
Substituting,
\[
  \Phi = A_*L_*B_* = A_*P^{-1}L_0Q^{-1}B_*.
\]
Setting $A:=A_*P^{-1}$ and $B:=Q^{-1}B_*$ gives $\Phi=AL_0B$.
\end{proof}

\begin{corollary}[Gauge reduction of the moving null factor]
\label{cor:gauge-reduction-moving-L}
Let $M\subset\C$ be simply connected and let
$L\colon M\to\HC\setminus\{0\}$ be holomorphic with $L^s=0$. Fix a
non-zero null element $L_0\in\HC$. Then there exist holomorphic maps
$P,Q\colon M\to\Sp(1,\C)$ satisfying
\begin{equation}
\label{eq:moving-L-gauge-reduction}
  L = PL_0Q.
\end{equation}
Consequently, every moving $ALB$-representation $\Phi=ALB$ rewrites
in the fixed-null gauge as $\Phi=\widetilde{A}L_0\widetilde{B}$ with
$\widetilde{A}:=AP$ and $\widetilde{B}:=QB$.
\end{corollary}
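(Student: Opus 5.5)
The plan is to apply \Cref{thm:global-ALB-representation} directly to the moving null factor $L$ itself, rather than to $\Phi$. The hypotheses of the corollary are exactly those of the theorem: $M$ is simply connected, and $L\colon M\to\HC$ is holomorphic with $L^s=0$ and $L\neq0$ everywhere. So $L$ is a nowhere-vanishing holomorphic null curve in the sense of \Cref{subsec:holomorphic-null-curves}. Only the algebraic hypotheses (holomorphic, null, nowhere zero) enter the proof of the theorem; the fact that $\Phi$ was there interpreted as $2X_z$ plays no role.

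Invoking the theorem with the chosen constant $L_0$ gives holomorphic maps $P,Q\colon M\to\Sp(1,\C)$ with $L=PL_0Q$, which is \Cref{eq:moving-L-gauge-reduction}.

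For the consequence, substitute into $\Phi=ALB$ and use associativity of $\HC$:
\[
\Phi=A(PL_0Q)B=(AP)L_0(QB).
\]
It remains to check that $\widetilde{A}:=AP$ and $\widetilde{B}:=QB$ are holomorphic maps into $\Sp(1,\C)$. Holomorphicity follows because products of holomorphic $\HC$-valued maps are holomorphic; componentwise, quaternion multiplication is bilinear with constant coefficients. Membership in $\Sp(1,\C)$ follows from multiplicativity \Cref{eq:symmetrized-norm-multiplicative}: $(AP)^s=A^sP^s=1$ and $(QB)^s=Q^sB^s=1$.

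There is no genuine obstacle here, since the heavy lifting is the line-bundle triviality and Bézout solvability already used in the proof of \Cref{thm:global-ALB-representation}. The only point I would state explicitly is that the theorem applies verbatim to $L$, because it never uses any property of $\Phi$ beyond being holomorphic, null and nowhere zero on a simply connected domain.
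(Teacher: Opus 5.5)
Your proposal is correct and matches the paper's proof: you apply \Cref{thm:global-ALB-representation} to $L$, viewed as a nowhere-vanishing holomorphic null curve, to obtain $L=PL_0Q$, and then regroup $\Phi=A(PL_0Q)B=(AP)L_0(QB)$. Your extra check that $AP$ and $QB$ are holomorphic with unit symmetrised norm, via multiplicativity, is a small detail the paper leaves implicit.
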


\begin{proof}
Apply \Cref{thm:global-ALB-representation} to $L$ viewed as a
nowhere-vanishing null curve: this yields $P,Q\colon M\to\Sp(1,\C)$
with $L=PL_0Q$. Substituting into $\Phi=ALB$ gives
$\Phi=A(PL_0Q)B=(AP)L_0(QB)$.
\end{proof}

\begin{remark}
\label{rem:moving-versus-fixed-gauge}
The fixed-null gauge, and especially the canonical choice
$L_0=1+\ii e_1$, will be used in the sequel for explicit
computations; in that gauge the superminimality condition takes a
factored form in terms of the Maurer--Cartan forms of $A$ and $B$.
\end{remark}

\begin{remark}
\label{rem:ALB-domain-zeros}
The assumption $\Phi\neq 0$ is essential: since $A,B\in\Sp(1,\C)$
are invertible and $L\neq 0$, we have $ALB\neq 0$ automatically. To
allow branched surfaces one writes $\Phi=h\,ALB$ where $h$ is a
holomorphic scalar carrying the zero divisor. Away from zeros of $h$
the projective null curve $[h\,ALB]=[ALB]$ is unchanged, so branching
does not affect the ruling condition for superminimality; the scalar
$h$ carries the branch divisor.
\end{remark}

\subsection{Minimal surfaces in \texorpdfstring{$\R^4$}{R4}
  via the \texorpdfstring{$ALB$}{ALB}-representation}
\label{subsec:main-theorem-R4}

We now translate the Weierstrass representation of conformal minimal
immersions into the $ALB$-language in two equivalent forms: a
fixed-null gauge, in which $L$ is a prescribed constant null element,
and a moving-null form, in which $L=L(z)$ is allowed to vary.

\begin{theorem}[$ALB$-representation of minimal surfaces in $\R^4$]
\label{thm:main-R4}
Let $M\subset\C$ be a simply connected domain. A smooth map
$X\colon M\to\R^4$ is a conformal minimal immersion if and only if,
for any fixed non-zero null element $L_0\in\HC$, there exist
holomorphic maps $A,B\colon M\to\Sp(1,\C)$ and a constant $c\in\R^4$
such that
\begin{equation}
\label{eq:main-formula-R4-fixed}
  X(z) = c+\re\int_{z_0}^{z}A(\zeta)\,L_0\,B(\zeta)\,\dd\zeta.
\end{equation}
Equivalently, $X$ is a conformal minimal immersion if and only if
there exist holomorphic $A,B\colon M\to\Sp(1,\C)$, a holomorphic null
map $L\colon M\to\HC\setminus\{0\}$ with $L(z)^s=0$, and $c\in\R^4$
such that
\begin{equation}
\label{eq:main-formula-R4-moving}
  X(z) = c+\re\int_{z_0}^{z}A(\zeta)\,L(\zeta)\,B(\zeta)\,\dd\zeta.
\end{equation}
\end{theorem}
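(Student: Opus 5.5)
The plan is to assemble \Cref{thm:main-R4} from three earlier results: the classical Weierstrass correspondence of \Cref{subsec:holomorphic-null-curves}, \Cref{prop:ALB-produces-null-curves}, and \Cref{thm:global-ALB-representation}. Each of the two equivalences splits into a necessity direction and a sufficiency direction. The fixed-null statement implies the moving-null one trivially, by taking $L\equiv L_0$. So it suffices to prove three things: necessity for the fixed-null form, sufficiency for the moving form, and a remark that sufficiency for the fixed form is a special case of the latter.

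\emph{Necessity.} Let $X\colon M\to\R^4$ be a conformal minimal immersion and set $\Phi:=2X_z=X_u-\ii X_v$. By \Cref{subsec:holomorphic-null-curves}, $\Phi$ is holomorphic because $X$ is harmonic (\Cref{rem:minimal-harmonic}). It satisfies $\Phi^s=0$ by conformality \Cref{eq:conformality-complex}, and it is nowhere zero because $X$ is an immersion. Given any nonzero null $L_0$, \Cref{thm:global-ALB-representation} yields holomorphic $A,B\colon M\to\Sp(1,\C)$ with $\Phi=AL_0B$. It remains to recover $X$ from $\Phi$. Since $X$ is real-valued, $\dd X=X_z\,\dz+X_{\bar z}\,\dd\bar z=2\re(X_z\,\dz)=\re(\Phi\,\dz)$. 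Integrating along any path from $z_0$, which is well defined because $M$ is simply connected and $\Phi\,\dz$ is a closed holomorphic form, gives $X(z)=X(z_0)+\re\int_{z_0}^z\Phi\,\dd\zeta$. This is \Cref{eq:main-formula-R4-fixed} with $c:=X(z_0)\in\R^4$.

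\emph{Sufficiency.} Given holomorphic $A,B$ and a holomorphic null map $L\neq0$, \Cref{prop:ALB-produces-null-curves} shows that $\Phi:=ALB$ is a nowhere-vanishing holomorphic null curve. Since $M$ is simply connected, $\int_{z_0}^z\Phi$ is a well-defined holomorphic primitive $F$, and $X:=c+\re F$ is smooth with $2X_z=F'=\Phi$. Then $\inner{X_z}{X_z}=\tfrac14\Phi^s=0$ and $\inner{X_z}{X_{\bar z}}=\tfrac14\inner{\Phi}{\overline\Phi}>0$, so $X$ is a conformal immersion by \Cref{eq:conformality-complex}. Moreover $X_{z\bar z}=\tfrac12\Phi_{\bar z}=0$, so $X$ is harmonic and hence minimal. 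The fixed case is the instance $L\equiv L_0$.

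There is no real obstacle, since the substantive step, the global triviality and the Bézout completion, is already contained in \Cref{thm:global-ALB-representation}. The only points requiring care are the following. First, the quantifier ``for any fixed $L_0$'' must be read as: for every $L_0$ the representation exists. Second, the recovery formula $X=c+\re\int 2X_z$ needs simple connectivity only for path-independence, and it holds for every choice of base point $z_0$.
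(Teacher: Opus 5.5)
Your proposal is correct and follows essentially the same route as the paper: necessity via $\Phi=2X_z$ and \Cref{thm:global-ALB-representation}, sufficiency via \Cref{prop:ALB-produces-null-curves}, with the fixed and moving forms linked by taking $L\equiv L_0$. The only difference is that you spell out details the paper leaves implicit, namely the identity $\dd X=\re(\Phi\,\dz)$ and the direct check that $c+\re\int\Phi\,\dz$ is conformal and harmonic.
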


\begin{proof}
Sufficiency of \Cref{eq:main-formula-R4-moving} follows from
\Cref{prop:ALB-produces-null-curves}: $\Phi:=ALB$ is a
nowhere-vanishing holomorphic null curve, and $X=c+\re\int\Phi\,\dz$
is a conformal minimal immersion. The fixed-null formula
\Cref{eq:main-formula-R4-fixed} is the special case
$L(\zeta)\equiv L_0$.

Conversely, let $X\colon M\to\R^4$ be a conformal minimal immersion
and set $\Phi:=2X_z$. By \Cref{subsec:holomorphic-null-curves}, $\Phi$
is a nowhere-vanishing holomorphic null curve. Fix $L_0\in\HC$ with
$L_0^s=0$ and $L_0\neq 0$. By \Cref{thm:global-ALB-representation},
there exist holomorphic $A,B\colon M\to\Sp(1,\C)$ with $\Phi=AL_0B$.
Since $2X_z=\Phi$, the difference $X-\re\int\Phi\,\dz$ is constant,
so
\[
  X(z) = c+\re\int_{z_0}^{z}A(\zeta)\,L_0\,B(\zeta)\,\dd\zeta
\]
for some $c\in\R^4$. The moving-null form follows by taking
$L\equiv L_0$, or more generally by \Cref{cor:gauge-reduction-moving-L}.
\end{proof}

\begin{remark}
\label{rem:immersion-condition-ALB}
The immersion condition is automatic from the $ALB$-data: since
$A^s=B^s=1$ and $L\neq 0$ we have $\Phi=ALB\neq 0$, hence
$\inner{\Phi}{\overline{\Phi}}>0$ everywhere. If an additional scalar
$h$ is included, as in $\Phi=h\,ALB$, then branch points occur
precisely at the zeros of $h$.
\end{remark}

\begin{remark}
\label{rem:R3-special-case}
When $\Phi$ takes values in $\operatorname{Im}(\HC)$, the formula
$X=c+\re\int\Phi\,\dz$ produces a conformal minimal immersion into
$\R^3\subset\R^4$. For example, take $L':=e_1+\ii e_2$, which
satisfies $(L')^s=0$. For holomorphic $A\colon M\to\Sp(1,\C)$, the
product $\Phi=AL'A^c$ is pure: for $k=1,2$ one has
$(Ae_kA^c)^c=Ae_k^cA^c=-Ae_kA^c$,
so $\Phi^c=-\Phi$. Hence
\[
  X = c+\re\int AL'A^c\,\dz
\]
takes values in $\R^3\subset\R^4$. This recovers the
complex-quaternionic representation of minimal surfaces in $\R^3$
developed in \cite{AltavillaSchroeckerSirVrsek2025}, where the
symmetry condition $B=A^c$ is established in full.
\end{remark}

\section{The Superminimality Condition in the \(ALB\)-Representation}
\label{sec:superminimal-ALB}

We now translate the superminimality condition $(\Phi')^s=0$ into the
$ALB$-language. In the moving-null representation $\Phi=ALB$ with
$\alpha=A^cA'$ and $\beta=B'B^c$, the derivative of the null factor
contributes an additional term; the superminimality condition becomes
\[
  (\alpha L+L'+L\beta)^s=0.
\]
In the fixed-null gauge ($L$ constant, $L'=0$) this reduces to
\[
  (\alpha L+L\beta)^s=0.
\]

\subsection{Logarithmic derivatives and the derivative of
  \texorpdfstring{$\Phi$}{Phi}}
\label{subsec:logarithmic-derivatives}

Let $\Phi=ALB$ be an $ALB$-representation. Since $A,B\in\Sp(1,\C)$
are invertible, with $A^{-1}=A^c$ and $B^{-1}=B^c$, we define the
left and right Maurer--Cartan forms
\begin{equation}
\label{eq:logarithmic-derivatives}
  \alpha := A^cA', \qquad \beta := B'B^c,
\end{equation}
so that $A'=A\alpha$ and $B'=\beta B$. Differentiating $A^cA=1$ and
$BB^c=1$ gives $\alpha^c=-\alpha$ and $\beta^c=-\beta$, so $\alpha$
and $\beta$ take values in $\operatorname{Im}(\HC)$; we write
$\alpha=\alpha_1e_1+\alpha_2e_2+\alpha_3e_3$ and
$\beta=\beta_1e_1+\beta_2e_2+\beta_3e_3$.

Differentiating $\Phi=ALB$ gives
\[
  \Phi' = A'LB+AL'B+ALB'
        = A\alpha LB+AL'B+AL\beta B
        = A(\alpha L+L'+L\beta)B.
\]
By multiplicativity of the symmetrised norm and $A^s=B^s=1$,
\begin{equation}
\label{eq:Phi-prime-norm-moving-ALB}
  (\Phi')^s = (\alpha L+L'+L\beta)^s.
\end{equation}

\begin{theorem}[Superminimality in moving $ALB$-form]
\label{thm:superminimal-moving-ALB}
Let $\Phi=ALB$ with $A,B\colon M\to\Sp(1,\C)$ holomorphic and
$L\colon M\to\HC\setminus\{0\}$ holomorphic with $L^s=0$. Let
$\alpha=A^cA'$ and $\beta=B'B^c$. Then $X=\re\int\Phi\,\dz$ is
superminimal if and only if
\begin{equation}
\label{eq:superminimal-moving-ALB}
  (\alpha L+L'+L\beta)^s=0.
\end{equation}
\end{theorem}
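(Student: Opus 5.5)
The plan is to combine \Cref{prop:superminimal-null-curve-condition} with the derivative computation already carried out in \Cref{subsec:logarithmic-derivatives}. First I check that the hypotheses of \Cref{prop:superminimal-null-curve-condition} hold. By \Cref{prop:ALB-produces-null-curves}, $\Phi=ALB$ is a nowhere-vanishing holomorphic null curve. Hence $X=\re\int\Phi\,\dz$ is a conformal minimal immersion with $2X_z=\Phi$. On a non-simply-connected $M$ I would work locally, since superminimality is a pointwise condition; equivalently, I would assume the real periods vanish. Then \Cref{prop:superminimal-null-curve-condition} says that $X$ is superminimal if and only if $(\Phi')^s\equiv 0$ on $M$.

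Next I use the identity $A'=A\alpha$ and $B'=\beta B$. This follows directly from $\alpha=A^cA'$, $\beta=B'B^c$, together with $AA^c=A^cA=1$ and $BB^c=1$, which hold because $A^s=B^s=1$. The Leibniz rule in the associative algebra $\HC$ then gives
\[
  \Phi'=A(\alpha L+L'+L\beta)B.
\]
Applying the multiplicativity \Cref{eq:symmetrized-norm-multiplicative} twice yields
\[
  (\Phi')^s=A^s\,(\alpha L+L'+L\beta)^s\,B^s=(\alpha L+L'+L\beta)^s.
\]
This is \Cref{eq:Phi-prime-norm-moving-ALB}. Substituting it into the criterion from the first step gives the claimed equivalence pointwise on $M$.

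No step is genuinely hard. The only point needing care is that \Cref{prop:superminimal-null-curve-condition} requires $\Phi=2X_z$ and that $X$ be an immersion. Both are guaranteed by the nonvanishing of $ALB$, which follows from $A$ and $B$ being invertible and $L\neq0$, as noted in \Cref{rem:immersion-condition-ALB}.

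If branch points were allowed through $\Phi=h\,ALB$, I would compute $(\Phi')^s=h^2(\alpha L+L'+L\beta)^s$. The term involving $h'$ drops out because $\inner{ALB}{ALB}=0$ forces $\inner{ALB}{(ALB)'}=0$. The identity principle would then extend the equivalence across the zeros of $h$.
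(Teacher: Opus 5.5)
Your proposal is correct and is essentially the paper's proof. Like the paper, you reduce superminimality to $(\Phi')^s=0$ via \Cref{prop:superminimal-null-curve-condition}, then use $\Phi'=A(\alpha L+L'+L\beta)B$ together with multiplicativity of the symmetrised norm and $A^s=B^s=1$; your extra checks (that $2X_z=\Phi$ is nowhere vanishing, and that $(\Phi')^s=h^2(\alpha L+L'+L\beta)^s$ in the branched case $\Phi=h\,ALB$) are accurate but go beyond what the paper's argument needs.
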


\begin{proof}
By \Cref{prop:superminimal-null-curve-condition}, $X$ is superminimal
if and only if $(\Phi')^s=0$. By \Cref{eq:Phi-prime-norm-moving-ALB},
this is equivalent to \Cref{eq:superminimal-moving-ALB}.
\end{proof}

\begin{remark}
\label{rem:moving-L-term}
The term $L'$ records the infinitesimal motion of the null direction
of $L(z)$. Since $L^s=0$, differentiating gives $\inner{L'}{L}=0$,
so $L'$ is tangent to the null cone along $L$; however, $(L')^s$ need
not vanish. Thus \Cref{eq:superminimal-moving-ALB} does not factor
into a product involving only $A$ and $B$. The factorization appears
in the fixed-null gauge, where $L'=0$.
\end{remark}

\begin{corollary}[Fixed-null gauge]
\label{cor:superminimal-fixed-null-gauge}
Assume $L\in\HC\setminus\{0\}$ is constant and null. Let $\Phi=ALB$
with $A,B\colon M\to\Sp(1,\C)$, and let $\alpha=A^cA'$,
$\beta=B'B^c$. Then $X=\re\int\Phi\,\dz$ is superminimal if and only
if $(\alpha L+L\beta)^s=0$, equivalently $\inner{\alpha L}{L\beta}=0$.
\end{corollary}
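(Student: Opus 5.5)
The first claim is the special case $L'=0$ of \Cref{thm:superminimal-moving-ALB}. When $L$ is constant, $\Phi'=A(\alpha L+L\beta)B$, and \Cref{eq:Phi-prime-norm-moving-ALB} reduces to $(\Phi')^s=(\alpha L+L\beta)^s$. By \Cref{prop:superminimal-null-curve-condition}, superminimality is therefore equivalent to $(\alpha L+L\beta)^s=0$. This step only needs multiplicativity of the symmetrised norm and $A^s=B^s=1$.

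For the equivalent form I would expand the quadratic form using its polar form. The symmetrised norm is the quadratic form of the $\C$-bilinear pairing $\inner{\cdot}{\cdot}$, so
\[
  (\alpha L+L\beta)^s=(\alpha L)^s+2\inner{\alpha L}{L\beta}+(L\beta)^s .
\]
By \Cref{eq:symmetrized-norm-multiplicative}, $(\alpha L)^s=\alpha^sL^s=0$ and $(L\beta)^s=L^s\beta^s=0$, because $L^s=0$. Hence $(\alpha L+L\beta)^s=2\inner{\alpha L}{L\beta}$, and the two conditions coincide.

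The main point to check is that $q\mapsto q^s$ really is the quadratic form associated with $\inner{q}{r}=\sum q_jr_j$, so that $(x+y)^s=x^s+2\inner{x}{y}+y^s$ holds. This is immediate from \Cref{eq:symmetrized-norm}, since $q^s=\sum q_j^2$. Note that no invertibility of $\alpha$ or $\beta$ is used: multiplicativity holds for all complex quaternions, so it applies even though $\alpha,\beta$ are pure and may vanish at some points.
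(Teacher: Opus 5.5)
Your proposal is correct and matches the paper's proof: you set $L'=0$ in the moving-$ALB$ criterion to get $(\Phi')^s=(\alpha L+L\beta)^s$, then polarize and use multiplicativity with $L^s=0$ to remove $(\alpha L)^s$ and $(L\beta)^s$, which leaves $2\inner{\alpha L}{L\beta}$. Your added remark is also correct: multiplicativity of the symmetrised norm does not need $\alpha$ or $\beta$ to be invertible.
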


\begin{proof}
Since $L$ is constant, $L'=0$, so \Cref{eq:superminimal-moving-ALB}
reduces to $(\alpha L+L\beta)^s=0$. Using $(u+v)^s=u^s+v^s+2\inner{u}{v}$
with $u=\alpha L$ and $v=L\beta$, and noting that
$(\alpha L)^s=\alpha^sL^s=0$ and $(L\beta)^s=L^s\beta^s=0$ since
$L^s=0$, we obtain
\[
  (\alpha L+L\beta)^s = 2\inner{\alpha L}{L\beta}.
\]
Hence $(\alpha L+L\beta)^s=0$ if and only if $\inner{\alpha L}{L\beta}=0$.
\end{proof}

\begin{remark}
\label{rem:geometric-meaning-fixed-null}
In the fixed-null gauge, $\alpha L$ and $L\beta$ describe the two
infinitesimal spinorial motions of the projective null curve
$[\Phi]\colon M\to Q^2\simeq\CP^1\times\CP^1$. The condition
$\inner{\alpha L}{L\beta}=0$ asserts that their sum is null, or
equivalently that the tangent direction of $[\Phi]$ lies in one of
the two rulings of the quadric.
\end{remark}

\subsection{Normal form in a fixed-null gauge}
\label{subsec:normal-form-null-vector}

We now specialise the fixed-null gauge to the standard null element
$L=1+\ii e_1$. With $\alpha$ and $\beta$ as in
\Cref{eq:logarithmic-derivatives}, a direct computation gives
\[
  \alpha L
  =
  -\ii\alpha_1+\alpha_1e_1
  +(\alpha_2+\ii\alpha_3)e_2
  +(\alpha_3-\ii\alpha_2)e_3,\quad
  L\beta
  =
  -\ii\beta_1+\beta_1e_1
  +(\beta_2-\ii\beta_3)e_2
  +(\beta_3+\ii\beta_2)e_3.
\]
Setting $U:=\alpha_2+\ii\alpha_3$, $V:=\beta_2-\ii\beta_3$, and
$W:=\alpha_1+\beta_1$, we obtain
$\alpha L+L\beta=-\ii W+We_1+(U+V)e_2+\ii(V-U)e_3$.
Since $(-\ii W)^2+W^2=0$, the scalar and $e_1$-components contribute
zero, and
\begin{equation}
\label{eq:normal-form-product-components}
  (\alpha L+L\beta)^s
  =
  (U+V)^2+\bigl(\ii(V-U)\bigr)^2
  =
  4UV
  =
  4(\alpha_2+\ii\alpha_3)(\beta_2-\ii\beta_3).
\end{equation}

\begin{theorem}[Superminimality in the normal form]
\label{thm:superminimal-ALB-normal-form}
Assume $L=1+\ii e_1$. Let $X=\re\int ALB\,\dz$ be a conformal minimal
immersion with holomorphic $A,B\colon M\to\Sp(1,\C)$, and let
$\alpha=A^cA'$, $\beta=B'B^c$. Then $X$ is superminimal if and only if
\begin{equation}
\label{eq:superminimal-ALB-product}
  (\alpha_2+\ii\alpha_3)(\beta_2-\ii\beta_3)=0.
\end{equation}
If $M$ is connected, this is equivalent to at least one of
\begin{equation}
\label{eq:superminimal-ALB-alternatives}
  \alpha_2+\ii\alpha_3\equiv0,
  \qquad
  \beta_2-\ii\beta_3\equiv0.
\end{equation}
For non-planar superminimal immersions, exactly one alternative holds.
\end{theorem}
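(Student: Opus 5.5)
The first equivalence is a direct chaining of earlier results. By \Cref{cor:superminimal-fixed-null-gauge}, since $L=1+\ii e_1$ is a constant non-zero null element, $X$ is superminimal if and only if $(\alpha L+L\beta)^s=0$. The computation preceding the statement, \Cref{eq:normal-form-product-components}, gives $(\alpha L+L\beta)^s=4UV$ with $U=\alpha_2+\ii\alpha_3$ and $V=\beta_2-\ii\beta_3$. Hence superminimality is equivalent to \Cref{eq:superminimal-ALB-product}. I would re-verify the two products $\alpha L$ and $L\beta$ componentwise, using $e_ke_1$ and $e_1e_k$ from the multiplication table, since that is the only place a sign error could enter. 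I would also note that the scalar and $e_1$ components form the isotropic pair $(-\ii W,W)$, which contributes nothing to the square norm.

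For the dichotomy, note that $A$ and $B$ are holomorphic, so $\alpha=A^cA'$ and $\beta=B'B^c$ are holomorphic, and therefore so are $U$ and $V$. The ring of holomorphic functions on a connected domain is an integral domain by the identity principle. So $UV\equiv0$ forces $U\equiv0$ or $V\equiv0$. The converse direction is trivial.

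The remaining claim, that for non-planar immersions exactly one alternative holds, is the main obstacle. The plan is to show that $U\equiv V\equiv0$ forces $X$ to be planar. If both vanish, then $\alpha L+L\beta=W(-\ii+e_1)=-\ii W L$. So $\Phi'=A(\alpha L+L\beta)B=-\ii W\,ALB=-\ii W\Phi$. Thus $\Phi$ satisfies the scalar linear ODE $\Phi'=h\Phi$ with $h$ holomorphic, and hence $\Phi=e^{\int h}\,\Phi(z_0)$ on the simply connected $M$. Consequently $[\Phi]$ is constant. Then $X=c+\re\big(\Phi(z_0)\int e^{\int h}\big)$ lies in the affine real plane spanned by $\re\Phi(z_0)$ and $\im\Phi(z_0)$. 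These are independent because $\Phi(z_0)$ is a non-zero null vector, so $X$ is planar. Contrapositively, for a non-planar superminimal immersion at most one of $U,V$ vanishes identically, and by the previous paragraph at least one does. This establishes exactly one.

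I would optionally add a consistency check. Left multiplication by $A$ moves the column line $[u]$ and right multiplication by $B$ moves the row line $[v]$, as noted in \Cref{subsec:spin-action}. So $U$ and $V$ should be, up to non-vanishing factors, the derivatives $g_1'$ and $g_2'$ of the two components of the Gauss map. This matches \Cref{eq:superminimal-gauss}, though the proof does not need it.
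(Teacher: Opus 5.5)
Your proposal is correct and follows the paper's proof: it combines \Cref{cor:superminimal-fixed-null-gauge} with the $4UV$ computation, applies the identity principle on a connected domain, and then shows that $U\equiv V\equiv0$ forces $[\Phi]$ to be constant and hence $X$ to be planar. The only variation is in that last step, where you derive $\Phi'=-\ii W\Phi$ directly from $\alpha L+L\beta=-\ii WL$, whereas the paper invokes the later \Cref{prop:AL-LB-geometric-meaning} (constancy of both $[AL]$ and $[LB]$); your version is self-contained and avoids that forward reference.
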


\begin{proof}
Since $L$ is constant, \Cref{cor:superminimal-fixed-null-gauge} gives
$(\alpha L+L\beta)^s=0$ as the superminimality condition. By
\Cref{eq:normal-form-product-components}, this is equivalent to
\Cref{eq:superminimal-ALB-product}.

If $M$ is connected, the functions $\alpha_2+\ii\alpha_3$ and
$\beta_2-\ii\beta_3$ are holomorphic; if their product vanishes
identically, the identity principle forces at least one to vanish
identically. If both vanish, then by \Cref{prop:AL-LB-geometric-meaning}
below, both $[AL]$ and $[LB]$ are constant, so
$[\Phi]=[ALB]\colon M\to Q^2$ is constant. Hence there exist a fixed
non-zero null vector $\Phi_0\in\C^4$ and a nowhere-vanishing
holomorphic function $\mu$ with $\Phi=\mu\Phi_0$. The image of
$X=\re\int\mu(z)\Phi_0\,\dz$ lies in the real two-plane spanned by
$\re\Phi_0$ and $\im\Phi_0$, so the surface is planar. Therefore, in
the non-planar case, exactly one alternative holds.
\end{proof}

\begin{corollary}
\label{cor:superminimal-AB-components}
Let $L=1+\ii e_1$, and write $A=A_0+A_1e_1+A_2e_2+A_3e_3$ and
$B=B_0+B_1e_1+B_2e_2+B_3e_3$. Then $X=\re\int ALB\,\dz$ is
superminimal if and only if
\begin{equation}
\label{eq:superminimal-AB-components}
\begin{aligned}
  &\bigl[(A_0-\ii A_1)(A_2+\ii A_3)'-(A_0-\ii A_1)'(A_2+\ii A_3)\bigr]\\
  &\quad\cdot\bigl[(B_0-\ii B_1)(B_2-\ii B_3)'
    -(B_0-\ii B_1)'(B_2-\ii B_3)\bigr]=0.
\end{aligned}
\end{equation}
On a connected $M$, this is equivalent to at least one of the
holomorphic maps $[A_0-\ii A_1:A_2+\ii A_3]\colon M\to\CP^1$ and
$[B_0-\ii B_1:B_2-\ii B_3]\colon M\to\CP^1$ being constant.
\end{corollary}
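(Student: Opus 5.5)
The plan is to evaluate the two factors in \Cref{eq:superminimal-ALB-product} through the matrix model $\Psi$ of \Cref{eq:HC-matrix-model}, where quaternionic products become $2\times2$ matrix products. By \Cref{thm:superminimal-ALB-normal-form}, superminimality is equivalent to $(\alpha_2+\ii\alpha_3)(\beta_2-\ii\beta_3)=0$. So it suffices to show that each factor is, up to sign, the corresponding bracket in \Cref{eq:superminimal-AB-components}.

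\textbf{Step 1: the factor $\alpha_2+\ii\alpha_3$.} For a pure quaternion $\alpha$, the $(1,2)$ entry of $\Psi(\alpha)$ is $\alpha_2+\ii\alpha_3$. Write $\Psi(A)=\begin{pmatrix} a&b\\ c&d\end{pmatrix}$ with $a=A_0+\ii A_1$, $b=A_2+\ii A_3$, $c=-(A_2-\ii A_3)$, $d=A_0-\ii A_1$. Since $A^s=1$, we have $\Psi(A^c)=\Psi(A)^{-1}=\begin{pmatrix} d&-b\\ -c&a\end{pmatrix}$, using \Cref{eq:det-Psi-symmetrized-norm}. The $(1,2)$ entry of $\Psi(A)^{-1}\Psi(A')$ is $db'-bd'$, so
\[
  \alpha_2+\ii\alpha_3=(A_0-\ii A_1)(A_2+\ii A_3)'-(A_0-\ii A_1)'(A_2+\ii A_3).
\]

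\textbf{Step 2: the factor $\beta_2-\ii\beta_3$.} For a pure quaternion $\beta$, the $(2,1)$ entry of $\Psi(\beta)$ is $-(\beta_2-\ii\beta_3)$. Use the same notation $a,b,c,d$ for the entries of $\Psi(B)$. The $(2,1)$ entry of $\Psi(B')\Psi(B)^{-1}$ is $c'd-d'c$. Since $c=-(B_2-\ii B_3)$, this gives
\[
  \beta_2-\ii\beta_3=(B_0-\ii B_1)(B_2-\ii B_3)'-(B_0-\ii B_1)'(B_2-\ii B_3).
\]
Steps 1 and 2 are the only real computations, and the main risk is a sign or index slip, especially in checking that $\beta_2-\ii\beta_3$ sits in the $(2,1)$ slot with a minus sign. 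I would double-check this against the explicit product $L\beta$ computed before \Cref{eq:normal-form-product-components}. A global sign would not affect the vanishing condition anyway. Combining Steps 1 and 2 with \Cref{thm:superminimal-ALB-normal-form} gives \Cref{eq:superminimal-AB-components}.

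\textbf{Step 3: the projective reformulation.} On a connected $M$, the identity principle makes \Cref{eq:superminimal-AB-components} equivalent to one bracket vanishing identically. Each bracket is a Wronskian $pq'-p'q$, with $(p,q)=(d,b)$ for $A$ and $(d,c)$ up to sign for $B$. These pairs have no common zeros, since $ad-bc=1$. The map $[p:q]\colon M\to\CP^1$ is therefore a well-defined holomorphic map. In charts, the Wronskian equals $p^2(q/p)'$ where $p\neq0$ and $-q^2(p/q)'$ where $q\neq0$. Hence it vanishes identically exactly when $[p:q]$ is locally constant, and by connectedness, constant. This yields the stated equivalence with constancy of $[A_0-\ii A_1:A_2+\ii A_3]$ or $[B_0-\ii B_1:B_2-\ii B_3]$.
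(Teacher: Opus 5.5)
Your proof is correct and has the same structure as the paper's. You reduce to \Cref{thm:superminimal-ALB-normal-form}, identify each factor as a Wronskian, and then conclude with the Wronskian criterion for constancy of $[p:q]$, using the absence of common zeros that $A^s=B^s=1$ gives via the matrix model. The only difference is that you read off $\alpha_2+\ii\alpha_3$ and $\beta_2-\ii\beta_3$ as off-diagonal entries of $\Psi(A)^{-1}\Psi(A')$ and $\Psi(B')\Psi(B)^{-1}$ (with $\Psi(A)^{-1}$ the adjugate), instead of expanding $A^cA'$ and $B'B^c$ componentwise; this is shorter and makes the Wronskian shape immediately visible.
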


\begin{proof}
We start by computing $\alpha_2+\ii\alpha_3$
Writing
$$
  A^c=A_0-A_1e_1-A_2e_2-A_3e_3,
  \qquad
  A'=A_0'+A_1'e_1+A_2'e_2+A_3'e_3,
$$
and expanding the product $A^cA'$, the $e_2$- and $e_3$-components are
$$
  \alpha_2
  =
  A_0A_2'-A_2A_0'+A_1A_3'-A_3A_1',\quad
  \alpha_3
  =
  A_0A_3'-A_3A_0'-A_1A_2'+A_2A_1'.
$$
Hence
$$
\begin{aligned}
  \alpha_2+\ii\alpha_3
  &=
  A_0A_2'-A_2A_0'+A_1A_3'-A_3A_1' \\
  &\qquad
  +\ii\bigl(A_0A_3'-A_3A_0'-A_1A_2'+A_2A_1'\bigr) \\
  &=
  (A_0-\ii A_1)(A_2+\ii A_3)'
  -(A_0-\ii A_1)'(A_2+\ii A_3).
\end{aligned}
$$
Analogously, from $\beta=B'B^c$, we get
\[
  \beta_2-\ii\beta_3
  =
  (B_0-\ii B_1)(B_2-\ii B_3)'-(B_0-\ii B_1)'(B_2-\ii B_3).
\]
Substituting into \Cref{eq:superminimal-ALB-product} gives
\Cref{eq:superminimal-AB-components}.

Since $A^s=B^s=1$, the matrix model shows that the pairs
$(A_0-\ii A_1,A_2+\ii A_3)$ and $(B_0-\ii B_1,B_2-\ii B_3)$ have no
common zeros, so they define holomorphic maps to $\CP^1$. A pair of
holomorphic functions $(f,g)$ without common zeros defines a constant
map $[f:g]\colon M\to\CP^1$ if and only if its Wronskian $fg'-f'g$
vanishes identically. Hence the vanishing of at least one factor in
\Cref{eq:superminimal-AB-components} is equivalent to the constancy of
at least one of the two projective maps.
\end{proof}

\begin{proposition}
\label{prop:AL-LB-geometric-meaning}
Let $L=1+\ii e_1$. Then $\alpha_2+\ii\alpha_3\equiv0$ if and only if
$[AL]\colon M\to\CP^3$ is constant. Similarly,
$\beta_2-\ii\beta_3\equiv0$ if and only if $[LB]\colon M\to\CP^3$ is
constant.
\end{proposition}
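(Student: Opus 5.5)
The plan is to work in the matrix model $\Psi$. There $L=1+\ii e_1$ becomes $2E_{22}$, so $AL$ keeps only the second column of $\Psi(A)$ and $LB$ keeps only the second row of $\Psi(B)$. Constancy of $[AL]$ is then constancy of a line in $\C^2$, which is a Wronskian condition. I will then identify that Wronskian with $\alpha_2+\ii\alpha_3$ using the component formula already obtained in the proof of \Cref{cor:superminimal-AB-components}.

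First I compute $AL$ directly in quaternionic components. Multiplying $A=A_0+A_1e_1+A_2e_2+A_3e_3$ on the right by $1+\ii e_1$ gives
\[
AL=(A_0-\ii A_1)(1+\ii e_1)+(A_2+\ii A_3)(e_2-\ii e_3).
\]
One should double-check the sign: $e_2e_1=-e_3$ and $e_3e_1=e_2$, so $A_2e_2\cdot\ii e_1=-\ii A_2e_3$ and $A_3e_3\cdot\ii e_1=\ii A_3 e_2$. The two fixed quaternions $1+\ii e_1$ and $e_2-\ii e_3$ are linearly independent in $\HC$. Hence $AL=f\,p+g\,q$ with $f:=A_0-\ii A_1$, $g:=A_2+\ii A_3$ and $p,q$ constant and independent.

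The pair $(f,g)$ never vanishes simultaneously, since $A^s=1$ forces $AL\neq 0$. So $[AL]$ is constant in $\CP^3$ if and only if $[f:g]$ is constant in $\CP^1$. As in the proof of \Cref{cor:superminimal-AB-components}, this holds if and only if the Wronskian $fg'-f'g$ vanishes identically. That proof already shows $fg'-f'g=\alpha_2+\ii\alpha_3$, which closes the first equivalence.

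For $B$ the argument is symmetric. I expand
\[
LB=(B_0-\ii B_1)(1+\ii e_1)+(B_2-\ii B_3)(e_2+\ii e_3),
\]
using $e_1e_2=e_3$ and $e_1e_3=-e_2$. I then invoke the same Wronskian identity for $\beta_2-\ii\beta_3$ from the corollary.

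The only real obstacle is sign bookkeeping in these two products. Errors there would mismatch the projective coordinates with the Maurer--Cartan combinations. I would verify it against the matrix picture: $\Psi(A)E_{22}$ has column $(A_2+\ii A_3,\,A_0-\ii A_1)^t$, which is consistent.

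The constancy argument uses the identity principle only in the trivial direction. A nowhere-zero pair with vanishing Wronskian has constant ratio locally where $f\neq0$, and likewise where $g\neq 0$. Connectedness of $M$ then propagates the constant.
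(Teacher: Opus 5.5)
Your proof is correct, and the sign bookkeeping checks out. Indeed $AL=(A_0-\ii A_1)(1+\ii e_1)+(A_2+\ii A_3)(e_2-\ii e_3)$ and $LB=(B_0-\ii B_1)(1+\ii e_1)+(B_2-\ii B_3)(e_2+\ii e_3)$. The Wronskian identities you borrow from the proof of \Cref{cor:superminimal-AB-components} are pure algebra, so there is no circularity: the only earlier citation of this proposition is the ``exactly one alternative'' clause of \Cref{thm:superminimal-ALB-normal-form}, which you do not use.

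The paper's proof takes a different and shorter route. It never expands $A$ in components. Instead it uses $A'=A\alpha$ to write $(AL)'=A(\alpha L)$. Since $A$ is invertible, constancy of $[AL]$ is then equivalent to the pointwise condition $\alpha L\in\C L$. This is read off directly from $\alpha L=-\ii\alpha_1L+(\alpha_2+\ii\alpha_3)(e_2-\ii e_3)$.

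That approach needs only the expansion of $\alpha L$, which is linear in $\alpha$, rather than the bilinear formula expressing $\alpha_2+\ii\alpha_3$ through $A$ and $A'$. It also transfers verbatim to any fixed null $L$, where it becomes the $\mathfrak{p}_L^\ell$-criterion of \Cref{rem:intrinsic-pL}.

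Your route gives something in exchange. It identifies $[AL]$ explicitly with the map $[A_0-\ii A_1:A_2+\ii A_3]$ on the projective line spanned by $1+\ii e_1$ and $e_2-\ii e_3$, and similarly for $[LB]$. It therefore shows at the same time that the two $\CP^1$-valued maps of \Cref{cor:superminimal-AB-components} are exactly the spinorial components $[AL]$ and $[LB]$. The paper leaves that link to the matrix-model remark in the proof of \Cref{cor:superminimal-rulings}.
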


\begin{proof}
Since $A'=A\alpha$, we have $(AL)'=A(\alpha L)$. Since $A$ is
invertible, $[AL]$ is constant if and only if
\[
  \alpha L\in\C L.
\]
From the expansion of $\alpha L$ computed above, the first two terms
satisfy $-\ii\alpha_1+\alpha_1e_1=(-\ii\alpha_1)L$, so $\alpha L\in\C L$
if and only if the $e_2$ and $e_3$ components vanish, i.e.,
$\alpha_2+\ii\alpha_3=0$.

The proof for $B$ is analogous. Since $B'=\beta B$, we have
$(LB)'=L\beta B$. Since $B$ is invertible, $[LB]$ is constant if and
only if $L\beta\in\C L$. From the expansion of $L\beta$ computed above,
$L\beta\in\C L$ if and only if $\beta_2-\ii\beta_3=0$.
\end{proof}

\begin{corollary}
\label{cor:superminimal-rulings}
Let $M$ be connected and let $X=\re\int ALB\,\dz$ be a conformal
minimal immersion in the fixed-null gauge $L=1+\ii e_1$. Then $X$ is
superminimal if and only if one of the two spinorial components of the
generalised Gauss map $[\Phi]\colon M\to Q^2\simeq\CP^1\times\CP^1$
is constant.
\end{corollary}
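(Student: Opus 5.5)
The plan is to identify the two spinorial components of $[\Phi]$ concretely through the matrix model $\Psi$, and then invoke \Cref{thm:superminimal-ALB-normal-form} together with \Cref{prop:AL-LB-geometric-meaning}.

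\emph{Step 1: identify the components.} Under $\Psi$ we have $\Psi(L)=2E_{22}=2e_2e_2^t$, where $e_2$ is the second standard basis vector of $\C^2$. Hence
\[
  \Psi(\Phi)=\Psi(A)\,\Psi(L)\,\Psi(B)=2\,\bigl(\Psi(A)e_2\bigr)\bigl(e_2^t\Psi(B)\bigr).
\]
By \Cref{subsec:spin-action}, the Segre identification $Q^2\simeq\CP^1\times\CP^1$ sends $[uv^t]$ to $([u],[v])$. The two spinorial components of $[\Phi]$ are therefore
\[
  [u]=[\text{second column of }\Psi(A)],
  \qquad
  [v]=[\text{second row of }\Psi(B)].
\]
Both are well defined because $\Psi(A),\Psi(B)\in SL(2,\C)$.

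\emph{Step 2: translate constancy of $[u]$ and $[v]$.} The class $[AL]$ corresponds to $[\Psi(A)E_{22}]=[u\,e_2^t]$, and this is constant exactly when $[u]$ is constant. Symmetrically, $[LB]$ corresponds to $[e_2v^t]$, which is constant exactly when $[v]$ is constant. By \Cref{prop:AL-LB-geometric-meaning}, constancy of $[u]$ is then equivalent to $\alpha_2+\ii\alpha_3\equiv0$, and constancy of $[v]$ to $\beta_2-\ii\beta_3\equiv0$.

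As a cross-check, one can compute the components directly from \Cref{eq:HC-matrix-model}. The second column of $\Psi(A)$ is $(A_2+\ii A_3,\,A_0-\ii A_1)^t$, and the second row of $\Psi(B)$ is $(-B_2+\ii B_3,\,B_0-\ii B_1)$. Up to sign these are exactly the $\CP^1$-maps of \Cref{cor:superminimal-AB-components}, so that corollary gives an independent route to the same equivalences.

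\emph{Step 3: conclude.} Since $M$ is connected, \Cref{thm:superminimal-ALB-normal-form} says $X$ is superminimal if and only if one of $\alpha_2+\ii\alpha_3$ or $\beta_2-\ii\beta_3$ vanishes identically. By Step 2, this holds if and only if one spinorial component of $[\Phi]$ is constant.

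The main obstacle is bookkeeping rather than substance. One must check that the ``spinorial components'' $([u],[v])$ coming from the rank-one factorization agree with the components $(g_1,g_2)$ of \Cref{subsec:projective-null-quadric-prelim}, up to a fixed automorphism of each $\CP^1$ factor (possibly swapping the factors). I would settle this by observing that both are Segre-type identifications of $Q^2$ compatible with the $SO(4,\C)$-action, so they differ by constant Möbius maps on each factor, and constancy of a component is insensitive to such maps. This also reconciles the statement with \Cref{eq:superminimal-gauss}.
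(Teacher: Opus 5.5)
Your proposal is correct and follows essentially the paper's own proof: you combine \Cref{thm:superminimal-ALB-normal-form} with \Cref{prop:AL-LB-geometric-meaning}, and you read off the two Segre components from $\Psi(ALB)=\Psi(A)(2E_{22})\Psi(B)$ as the second column of $\Psi(A)$ and the second row of $\Psi(B)$. The bookkeeping issue you raise, which the paper passes over quickly, is settled by a one-line computation rather than an abstract argument: $\Psi(\sigma([s_0:s_1],[t_0:t_1]))=2\,(s_1,s_0)^t(t_1,t_0)$, so $[u]$ and $[v]$ correspond exactly to $g_1$ and $g_2$, and the factors are not swapped.
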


\begin{proof}
By \Cref{thm:superminimal-ALB-normal-form}, $X$ is superminimal iff
$\alpha_2+\ii\alpha_3\equiv0$ or $\beta_2-\ii\beta_3\equiv0$. By
\Cref{prop:AL-LB-geometric-meaning}, these are equivalent to the
constancy of $[AL]$ and $[LB]$ respectively.

In the matrix model, $\Psi(L)=2E_{22}$, so
\[
  \Psi(ALB) = \widetilde{A}\,(2E_{22})\,\widetilde{B},
\]
which is twice the outer product of the second column of $\widetilde{A}$
and the second row of $\widetilde{B}$. Hence the two components of the
Segre map $[\Phi]\colon M\to Q^2\simeq\CP^1\times\CP^1$ are encoded by
$[AL]$ and $[LB]$; their constancy is precisely the constancy of one
spinorial component of the generalised Gauss map.
\end{proof}

\begin{remark}
\label{rem:ALB-rulings-geometric-summary}
\Cref{cor:superminimal-rulings} is the quaternionic translation of the
classical criterion \Cref{eq:superminimal-gauss}. The $ALB$-representation
makes this ruling condition explicit in terms of the Maurer--Cartan forms
$\alpha$ and $\beta$ via the factored formula
\Cref{eq:normal-form-product-components}.
\end{remark}

\begin{remark}[Intrinsic formulation]
\label{rem:intrinsic-pL}
The conditions $\alpha_2+\ii\alpha_3=0$ and $\beta_2-\ii\beta_3=0$
depend on the choice $L=1+\ii e_1$, but their vanishing has an
invariant meaning in any fixed-null gauge. For a fixed non-zero null
element $L\in\HC$, define the infinitesimal projective stabilisers
\begin{equation}
\label{eq:pL-definitions}
  \mathfrak{p}_L^\ell
  := \{\gamma\in\operatorname{Im}(\HC):\gamma L\in\C L\},
  \qquad
  \mathfrak{p}_L^r
  := \{\gamma\in\operatorname{Im}(\HC):L\gamma\in\C L\},
\end{equation}
the Lie algebras of the projective stabilisers of $[L]$ under left and
right multiplication. In the normal form $L=1+\ii e_1$,
{
let
$$
  \gamma=a e_1+b e_2+c e_3
  \in \operatorname{Im}(\HC).
$$
Then
$$
\begin{aligned}
  \gamma L
  &=
  (a e_1+b e_2+c e_3)(1+\ii e_1) \\
  &=
  a e_1+b e_2+c e_3
  +\ii a e_1^2+\ii b e_2e_1+\ii c e_3e_1 \\
  &=
  -\ii a
  +a e_1
  +(b+\ii c)e_2
  +(c-\ii b)e_3.
\end{aligned}
$$
Thus $\gamma L\in\C L$ if and only if the $e_2$- and
$e_3$-components vanish, that is,
$$
  b+\ii c=0,
  \qquad
  c-\ii b=0.
$$
Equivalently, $c=\ii b$. Hence
$$
  \gamma
  =
  a e_1+b(e_2+\ii e_3)
  =
  a e_1-\ii b(e_3-\ii e_2),
$$
and therefore
$$
  \mathfrak{p}_L^\ell
  =
  \C e_1\oplus\C(e_3-\ii e_2).
$$

Similarly,
$$
\begin{aligned}
  L\gamma
  &=
  (1+\ii e_1)(a e_1+b e_2+c e_3) \\
  &=
  a e_1+b e_2+c e_3
  +\ii a e_1^2+\ii b e_1e_2+\ii c e_1e_3 \\
  &=
  -\ii a
  +a e_1
  +(b-\ii c)e_2
  +(c+\ii b)e_3.
\end{aligned}
$$
Thus $L\gamma\in\C L$ if and only if
$$
  b-\ii c=0,
  \qquad
  c+\ii b=0,
$$
equivalently $c=-\ii b$. Hence
$$
  \gamma
  =
  a e_1+b(e_2-\ii e_3)
  =
  a e_1+\ii b(e_3+\ii e_2),
$$
and therefore
$$
  \mathfrak{p}_L^r
  =
  \C e_1\oplus\C(e_3+\ii e_2).
$$
}
Thus $\alpha_2+\ii\alpha_3=0$ iff $\alpha\in\mathfrak{p}_L^\ell$, and
$\beta_2-\ii\beta_3=0$ iff $\beta\in\mathfrak{p}_L^r$. In an arbitrary
fixed-null gauge, the superminimality dichotomy reads intrinsically as
\[
  \alpha\in\mathfrak{p}_L^\ell
  \qquad\text{or}\qquad
  \beta\in\mathfrak{p}_L^r.
\]
Setting $\eta_+:=e_2+\ii e_3$ and $\eta_-:=e_2-\ii e_3$, the
membership conditions are equivalently
$\alpha\in\mathfrak{p}_L^\ell\Leftrightarrow\inner{\alpha}{\eta_+}=0$
and
$\beta\in\mathfrak{p}_L^r\Leftrightarrow\inner{\beta}{\eta_-}=0$,
where orthogonality is complex-bilinear.
\end{remark}

\begin{remark}[Global constancy of the complex structure]
\label{rem:global-constant-J}
For a non-planar superminimal surface on a connected domain, exactly
one component of the generalised Gauss map is constant. This component
determines a fixed orthogonal complex structure
$J\in J^\pm(\R^4)\simeq S^2$ such that $X$ is globally a holomorphic
curve in $(\R^4,J)$; see \cite{Bryant1982,EellsSalamon1985}. In the
fixed $ALB$-gauge $L=1+\ii e_1$, the alternative
$\alpha_2+\ii\alpha_3\equiv0$ corresponds to one spin orientation and
$\beta_2-\ii\beta_3\equiv0$ to the other. Non-constant twistor lifts
arise in the broader theory of superconformal, not necessarily minimal,
surfaces.
\end{remark}

\subsection{ODE parametrization via the projective stabilizer}
\label{subsec:ODE-parametrization}

We now turn the fixed-gauge superminimality criterion into a
constructive description of the $ALB$-data, working in the gauge
$L=1+\ii e_1$. Set $N_-:=e_3-\ii e_2$ and $N_+:=e_3+\ii e_2$;
by \Cref{rem:intrinsic-pL},
\[
  \mathfrak{p}_L^\ell=\C e_1\oplus\C N_-,
  \qquad
  \mathfrak{p}_L^r=\C e_1\oplus\C N_+,
\]
and one verifies $N_-L=0$, $LN_+=0$, $N_-^2=N_+^2=0$. The left
superminimal branch is $\alpha=fe_1+hN_-$ and the right is
$\beta=ge_1+kN_+$, for holomorphic functions $f,h$ and $g,k$
respectively. These equations can be integrated explicitly: the
generators satisfy
\[
  e_1N_-=-\ii N_-,\quad N_-e_1=\ii N_-,\quad [e_1,N_-]=-2\ii N_-,
\]
and analogously $[e_1,N_+]=2\ii N_+$, so $\C e_1\oplus\C N_\pm$
are solvable two-dimensional Lie subalgebras of $\operatorname{Im}(\HC)$.
First-order systems of this type also appear in the twistor description
of superminimal surfaces in spheres; compare
\cite{Bryant1982,ChiFernandezWu1999}.

\begin{proposition}[ODE parametrization in the fixed-null gauge]
\label{prop:ODE-parametrization}
Let $M\subset\C$ be simply connected and fix $L=1+\ii e_1$.

For the left branch, choose holomorphic $f,h\colon M\to\C$, an
initial value $A_0\in\Sp(1,\C)$, and an arbitrary holomorphic
$B\colon M\to\Sp(1,\C)$. Let $F$ be the primitive of $f$ with
$F(z_0)=0$, and set
\[
  K(z) := e^{2\ii F(z)}\int_{z_0}^{z}h(\zeta)\,e^{-2\ii F(\zeta)}\,\dd\zeta.
\]
Then
\begin{equation}
\label{eq:A-explicit-left-ODE}
  A(z) = A_0\bigl(\cos F(z)+\sin F(z)\,e_1\bigr)\bigl(1+K(z)N_-\bigr)
\end{equation}
is the unique solution of
\begin{equation}
\label{eq:ODE-A}
  A' = A(fe_1+hN_-),\qquad A(z_0)=A_0,
\end{equation}
and satisfies $A^s\equiv1$. Hence $\Phi=ALB$ is a nowhere-vanishing
holomorphic null curve and $X=\re\int\Phi\,\dz$ is superminimal.

For the right branch, choose holomorphic $g,k\colon M\to\C$, an
initial value $B_0\in\Sp(1,\C)$, and an arbitrary holomorphic
$A\colon M\to\Sp(1,\C)$. Let $G$ be the primitive of $g$ with
$G(z_0)=0$, and set
\[
  R(z) := e^{2\ii G(z)}\int_{z_0}^{z}k(\zeta)\,e^{-2\ii G(\zeta)}\,\dd\zeta.
\]
Then
\begin{equation}
\label{eq:B-explicit-right-ODE}
  B(z) = \bigl(1+R(z)N_+\bigr)\bigl(\cos G(z)+\sin G(z)\,e_1\bigr)B_0
\end{equation}
is the unique solution of
\begin{equation}
\label{eq:ODE-B}
  B' = (ge_1+kN_+)B,\qquad B(z_0)=B_0,
\end{equation}
and satisfies $B^s\equiv1$; the corresponding $ALB$-data define a
superminimal conformal minimal immersion.

Conversely, every fixed-gauge superminimal $ALB$-representation
satisfying $\alpha\in\mathfrak{p}_L^\ell$ (resp.\ $\beta\in\mathfrak{p}_L^r$)
is locally, and globally on simply connected domains, obtained from
the left (resp.\ right) branch above.
\end{proposition}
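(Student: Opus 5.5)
The plan is to verify directly that the explicit formula solves the linear ODE, then use uniqueness for linear holomorphic ODEs, and finally invoke \Cref{thm:superminimal-ALB-normal-form} together with \Cref{rem:intrinsic-pL}. I treat the left branch in detail; the right branch is the mirror image, with multiplication order reversed and $N_-$ replaced by $N_+$.

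\textbf{Step 1: the two factors of $A$.} Set $E(z):=\cos F+\sin F\,e_1$ and $P(z):=1+KN_-$. Since $e_1^2=-1$, $E$ is the exponential of $Fe_1$, so $E'=fE e_1=fe_1E$. Moreover $E^s=\cos^2F+\sin^2F=1$. Because $N_-^2=0$, we get $P^s=PP^c=(1+KN_-)(1-KN_-)=1$, using $N_-^c=-N_-$. Hence $A^s=A_0^sE^sP^s=1$ by multiplicativity \Cref{eq:symmetrized-norm-multiplicative}.

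\textbf{Step 2: the ODE.} Compute
\[
A'=A_0E'P+A_0EP'=A_0E\bigl(fe_1P+K'N_-\bigr).
\]
We need this to equal $A(fe_1+hN_-)=A_0E\bigl(fPe_1+hPN_-\bigr)$. Since $N_-^2=0$, we have $PN_-=N_-$. Also
\[
e_1P-Pe_1=K[e_1,N_-]=-2\ii KN_-.
\]
The required identity therefore reduces to the scalar equation
\[
K'-2\ii fK=h,
\]
which is exactly what the definition of $K$ gives: $K'=2\ii fK+h$. At $z_0$ we have $F=K=0$, so $A(z_0)=A_0$.

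\textbf{Step 3: uniqueness and the superminimal conclusion.}
\begin{itemize}
\item \Cref{eq:ODE-A} is a linear holomorphic ODE $A'=A\Omega$ with $\Omega$ holomorphic. By standard uniqueness on a connected domain (locally, then via the identity principle), the solution is unique.
\item With $\alpha=A^cA'=fe_1+hN_-\in\mathfrak p_L^\ell$, we have $\alpha_2+\ii\alpha_3=0$ by \Cref{rem:intrinsic-pL}. Then \Cref{prop:ALB-produces-null-curves} and \Cref{thm:superminimal-ALB-normal-form} give that $\Phi$ is a nowhere-vanishing null curve and $X$ is superminimal, for any $B$.
\end{itemize}

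\textbf{Step 4: the right branch.} Proceed symmetrically with $E_G:=\cos G+\sin G\,e_1$ and $P_R:=1+RN_+$, writing $B=P_RE_GB_0$. Then
\[
B'=\bigl(R'N_++gP_Re_1\bigr)E_GB_0,
\]
and we want $(ge_1+kN_+)P_RE_GB_0$. Now $N_+P_R=N_+$ and $P_Re_1-e_1P_R=R(N_+e_1-e_1N_+)=-2\ii RN_+$. The requirement becomes $R'=2\ii gR+k$, which matches the definition of $R$.

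\textbf{Step 5: converse.} Suppose $\alpha\in\mathfrak p_L^\ell$. Then $\alpha=fe_1+hN_-$ with $f,h$ holomorphic, namely $f=\alpha_1$ and $h$ read off from the $N_-$ coefficient. So $A$ solves \Cref{eq:ODE-A} with $A_0=A(z_0)$, and by uniqueness $A$ coincides with the explicit formula. On a simply connected $M$ the primitives $F$ and $K$ exist globally, so this holds globally.

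\textbf{Main obstacle.} The main risk lies in sign conventions: whether $[e_1,N_\pm]=\mp2\ii N_\pm$ matches the exponent $e^{\pm2\ii F}$ in $K$. I would first check the commutator and the value $e_1N_-=-\ii N_-$ directly from the multiplication table. If the signs come out opposite, the formula for $K$ (and likewise $R$) would need the conjugate exponent.
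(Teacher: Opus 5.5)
Your proposal is correct and follows essentially the paper's proof: both reduce the ODE to the scalar identity $K'-2\ii fK=h$ via $[e_1,N_-]=-2\ii N_-$. The paper does this through the conjugation $(1-KN_-)e_1(1+KN_-)=e_1-2\ii KN_-$ inside $A^{-1}A'$, while you use the commutator when moving $e_1$ past $1+KN_-$. Both then deduce superminimality from $\alpha=fe_1+hN_-\in\mathfrak{p}_L^\ell$ and the normal-form theorem, and get the converse by uniqueness. The differences are cosmetic: the paper proves uniqueness by showing that the derivative of $\widetilde{A}A^{-1}$ vanishes, whereas you cite standard linear ODE uniqueness. The sign conventions you flagged do check out, including the right-branch requirement $R'=2\ii gR+k$.
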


\begin{proof}
We prove the left branch; the right is analogous.

Let $E:=e_1$ and $N:=N_-=e_3-\ii e_2$. Then
\[
  E^2=-1,\qquad N^2=0,\qquad EN=-\ii N,\qquad NE=\ii N.
\]
Set $R_F:=\cos F+\sin F\,E$ and $U_K:=1+KN$. Then
\[
  R_F'=R_FfE,\qquad U_K'=K'N,\qquad U_K^{-1}=1-KN.
\]
{
We have that,
$$
\begin{aligned}
  U_K^{-1}EU_K
  &=
  (1-KN)E(1+KN) \\
  &=
  E+KEN-KNE-K^2NEN.
\end{aligned}
$$
Since
$$
  EN=-\ii N,
  \qquad
  NE=\ii N,
  \qquad
  N^2=0,
$$
we also have
$$
  NEN=N(EN)=-\ii N^2=0.
$$
Therefore
$$
  U_K^{-1}EU_K
  =
  E+EKN-KNE-KNEKN
  =
  E-\ii KN-\ii KN
  =
  E-2\ii KN.
$$
Moreover,
$$
  U_K^{-1}U_K'
  =
  (1-KN)K'N
  =
  K'N-KK'N^2
  =
  K'N.
$$
Consequently,
$$
\begin{aligned}
  A^{-1}A'
  &=
  (A_o R_F U_K)^{-1}(A_0 R_F fE U_K+A_o R_F U_K')\\
  &=U_K^{-1}R_F^{-1} A_0^{-1}A_0 R_F fE U_K+U_K^{-1}R_F^{-1}A_0^{-1}A_0 R_F U_K'\\
  &=
  U_K^{-1}(fE)U_K+U_K^{-1}U_K' \\
  &=
  f(E-2\ii KN)+K'N \\
  &=
  fE+(K'-2\ii fK)N.
\end{aligned}
$$
}

From the definition of $K$, we have $K'-2\ii fK=h$. Thus
$A^{-1}A'=fE+hN$, equivalently $A'=A(fe_1+hN_-)$. The initial
conditions $F(z_0)=0$ and $K(z_0)=0$ give $A(z_0)=A_0$.

It remains to verify $A^s\equiv1$. Since $E^c=-E$, $N^c=-N$, and
$N^2=0$,
\[
  R_F^s=1,\qquad U_K^s=(1+KN)(1-KN)=1.
\]
Hence $A^s=A_0^sR_F^sU_K^s=1$, so $A\colon M\to\Sp(1,\C)$.

For any holomorphic $B\colon M\to\Sp(1,\C)$, set $\Phi=ALB$. Then
$\Phi$ is a nowhere-vanishing null curve, and since
\[
  \alpha=A^cA'=fe_1+hN_-\in\mathfrak{p}_L^\ell,
\]
we have $\alpha_2+\ii\alpha_3\equiv0$, so $X=\re\int\Phi\,\dz$ is
superminimal by \Cref{thm:superminimal-ALB-normal-form}.

Conversely, if $\alpha\in\mathfrak{p}_L^\ell$, then $\alpha=fe_1+hN_-$
for holomorphic $f,h$. Since $A'=A\alpha$, the factor $A$ is the
solution of \Cref{eq:ODE-A} with initial value $A(z_0)$, and the
explicit formula \Cref{eq:A-explicit-left-ODE} follows by uniqueness.
{
Indeed, let $\widetilde{A}$ be any other holomorphic solution of
$$
  \widetilde{A}'=\widetilde{A}(f e_1+hN_-),
  \qquad
  \widetilde{A}(z_0)=A_0.
$$
Since both $A$ and $\widetilde{A}$ take values in $\Sp(1,\C)$, they are
pointwise invertible. Set
$$
  M:=\widetilde{A}A^{-1}.
$$
Differentiating $AA^{-1}=1$ and using $A'=A(f e_1+hN_-)$, we obtain
$$
  (A^{-1})'=-(f e_1+hN_-)A^{-1}.
$$
Therefore
\begin{align*}
  M'
  &=
  \widetilde{A}'A^{-1}+\widetilde{A}(A^{-1})' \\
  &=
  \widetilde{A}(f e_1+hN_-)A^{-1}
  -\widetilde{A}(f e_1+hN_-)A^{-1} \\
  &=
  0.
\end{align*}
Hence $M$ is constant on $M$. Evaluating at $z_0$ gives
$$
  M(z_0)
  =
  \widetilde{A}(z_0)A(z_0)^{-1}
  =
  A_0A_0^{-1}
  =
  1,
$$
so $M\equiv 1$ and therefore $\widetilde{A}\equiv A$. This proves the
uniqueness of the solution with the prescribed initial value.

The simply connectedness of $M$ is used only to ensure that the
primitive
$$
  F(z)=\int_{z_0}^{z}f(\zeta)\,\dd\zeta
$$
and the integral defining
$$
  K(z)
  =
  e^{2\ii F(z)}
  \int_{z_0}^{z}h(\zeta)e^{-2\ii F(\zeta)}\,\dd\zeta
$$
are globally single-valued. Thus the explicit local solution given by
\eqref{eq:A-explicit-left-ODE} extends to a globally defined holomorphic
solution on all of $M$.
}
The right branch uses $N_+=e_3+\ii e_2$ with $e_1N_+=\ii N_+$ and
$N_+e_1=-\ii N_+$, and solves $B'=(ge_1+kN_+)B$.
\end{proof}

\begin{remark}
\label{rem:ODE-gauge-meaning}
The explicit formula \Cref{eq:A-explicit-left-ODE} separates the exact
stabiliser freedom from the projective stabiliser freedom. The
unipotent factors $1+KN_-$ and $1+RN_+$ belong to the exact
stabilisers of $L$ (since $N_-L=LN_+=0$): they may make $A$ or $B$
non-constant while leaving $AL$ or $LB$ unchanged. By contrast, the
factor $R_F=\cos F+\sin F\,e_1$ acts on $L$ by scalar multiplication,
\[
  R_FL=e^{-\ii F}L,\qquad LR_F=e^{-\ii F}L,
\]
preserving the projective null line $[L]$ but not $L$ itself. Thus the
$N_\mp$-terms are exact stabiliser directions, while the $e_1$-term is
a projective stabiliser direction.
\end{remark}

\subsection{The associate family}
\label{subsec:associate-family}

Given a conformal minimal immersion $X=c+\re\int\Phi\,\dz$, its
\emph{associate family} is the one-parameter deformation
\begin{equation}
\label{eq:associate-family}
  X_\theta := c+\re\!\int e^{\ii\theta}\Phi\,\dz,
  \qquad \theta\in[0,2\pi).
\end{equation}
Each $X_\theta$ is again a conformal minimal immersion. In the
$ALB$-representation, the associate family is obtained by multiplying
the null factor by the constant scalar $e^{\ii\theta}$.

\begin{proposition}[Associate family in $ALB$-language]
\label{prop:associate-family}
Let $X=c+\re\int ALB\,\dz$ be a conformal minimal immersion. For
$\theta\in[0,2\pi)$, set $L_\theta:=e^{\ii\theta}L$. Then the
associate family of $X$ is
\begin{equation}
\label{eq:associate-ALB}
  X_\theta = c+\re\!\int A\,L_\theta\,B\,\dz.
\end{equation}
Moreover, setting $\Phi_\theta:=AL_\theta B=e^{\ii\theta}\Phi$,
\begin{equation}
\label{eq:associate-supermin}
  (\Phi_\theta')^s = e^{2\ii\theta}(\Phi')^s.
\end{equation}
In particular, if $X$ is superminimal then every $X_\theta$ is
superminimal.
\end{proposition}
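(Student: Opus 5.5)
The argument is a direct computation: scalar central elements of $\HC$ commute with everything, and $(\cdot)^s$ is a quadratic form. We proceed in three steps.

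\textbf{Step 1: the representation of $X_\theta$.} Since $e^{\ii\theta}\in\C$ is central in $\HC$, we have
\[
  A L_\theta B = A(e^{\ii\theta}L)B = e^{\ii\theta}ALB = e^{\ii\theta}\Phi .
\]
Substituting this into the definition \Cref{eq:associate-family} of the associate family gives \Cref{eq:associate-ALB} immediately. We also check that $(A,L_\theta,B)$ is again admissible $ALB$-data. First, $L_\theta^s=e^{2\ii\theta}L^s=0$. Second, $L_\theta\neq 0$. Third, $L_\theta$ is holomorphic, and it is constant whenever $L$ is. So \Cref{prop:ALB-produces-null-curves} applies, and each $X_\theta$ is a conformal minimal immersion.

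\textbf{Step 2: the norm identity.} Because $\theta$ is a constant, differentiating gives $\Phi_\theta'=e^{\ii\theta}\Phi'$. By the multiplicativity \Cref{eq:symmetrized-norm-multiplicative}, or simply because $q\mapsto q^s$ is a complex quadratic form, we obtain
\[
  (\Phi_\theta')^s=(e^{\ii\theta})^s(\Phi')^s=e^{2\ii\theta}(\Phi')^s .
\]
This is \Cref{eq:associate-supermin}.

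\textbf{Step 3: preservation of superminimality.} If $X$ is superminimal, then $(\Phi')^s=0$ by \Cref{prop:superminimal-null-curve-condition}. Step 2 then gives $(\Phi_\theta')^s=0$. Since $\Phi_\theta=2(X_\theta)_z$ is a nowhere-vanishing holomorphic null curve, the same proposition shows that $X_\theta$ is superminimal.

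As a cross-check in $ALB$-language, note that $A$ and $B$ are unchanged, so $\alpha$ and $\beta$ are unchanged. The condition of \Cref{thm:superminimal-moving-ALB} for $L_\theta$ is therefore
\[
  \bigl(\alpha L_\theta+L_\theta'+L_\theta\beta\bigr)^s = e^{2\ii\theta}\bigl(\alpha L+L'+L\beta\bigr)^s ,
\]
which is again just a scalar multiple of the original condition.

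No step poses a real obstacle. The only point needing care is bookkeeping: $\Phi_\theta$ must be identified with $2(X_\theta)_z$ so that \Cref{prop:superminimal-null-curve-condition} applies to $X_\theta$. This holds because $X_\theta$ is defined precisely as the real part of the integral of $\Phi_\theta$.
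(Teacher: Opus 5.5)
Your proposal is correct and follows the paper's proof essentially step for step: centrality of $e^{\ii\theta}$ gives $\Phi_\theta=e^{\ii\theta}\Phi$, and differentiating and using $(e^{\ii\theta})^s=e^{2\ii\theta}$ gives the identity $(\Phi_\theta')^s=e^{2\ii\theta}(\Phi')^s$, from which superminimality transfers. Your extra bookkeeping is a harmless addition: you check that $X_\theta$ is again a conformal minimal immersion with $\Phi_\theta=2(X_\theta)_z$, and you run the moving-$L$ cross-check, which the paper places in the remark following the proof.
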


\begin{proof}
Since $L_\theta=e^{\ii\theta}L$, we have $L_\theta^s=e^{2\ii\theta}L^s=0$,
so $L_\theta$ is a null factor. Moreover,
$\Phi_\theta=AL_\theta B=e^{\ii\theta}ALB=e^{\ii\theta}\Phi$,
which gives \Cref{eq:associate-ALB}. Differentiating,
$\Phi_\theta'=e^{\ii\theta}\Phi'$ (since $e^{\ii\theta}$ is constant),
so $(\Phi_\theta')^s=e^{2\ii\theta}(\Phi')^s$. If $(\Phi')^s=0$ then
$(\Phi_\theta')^s=0$ for every $\theta$, so superminimality is
preserved.
\end{proof}

\begin{remark}
\label{rem:associate-family-moving-L}
In the moving-null formulation, the associate family fixes $A$ and $B$
and rotates $L(z)\mapsto e^{\ii\theta}L(z)$, leaving the Maurer--Cartan
forms $\alpha=A^cA'$ and $\beta=B'B^c$ unchanged. For
$L_\theta=e^{\ii\theta}L$, the moving superminimality expression gives
$\alpha L_\theta+L_\theta'+L_\theta\beta=e^{\ii\theta}(\alpha L+L'+L\beta)$,
so the condition scales by $e^{2\ii\theta}$, consistently with
\Cref{eq:associate-supermin}.
\end{remark}

\begin{remark}[The conjugate surface]
\label{rem:conjugate-surface}
The conjugate minimal surface of $X$ is $X^*:=X_{\pi/2}$, given by
\[
  X^* = c+\re\int\ii\Phi\,\dz = c-\im\int\Phi\,\dz.
\]
In $ALB$-language, this is $X^*=c+\re\int A(\ii L)B\,\dz$. The
holomorphic primitive $F:=\int\Phi\,\dz$ satisfies $F=X+\ii X^*$ up
to a constant. In the normal form $L=1+\ii e_1$, we have
$\ii L=\ii-e_1$, which is again a non-zero null element in the same
$\Sp(1,\C)\times\Sp(1,\C)$-orbit as $L$.
\end{remark}

\section{Polynomial Data and Fixed-Gauge Rigidity}
\label{sec:polynomial}

The aim of this section is to clarify what the
$ALB$-representation says when both spinorial factors are polynomial
and $L$ is kept fixed. All statements are in a chosen fixed-null gauge
and are not invariant under gauge changes in which the null factor is
allowed to move.

For a superminimal surface in the gauge $L=1+\ii e_1$, one of the two
spinorial components is constant, meaning either $[AL]$ or $[LB]$ is
constant. When $A$ and $B$ are polynomial, this projective constancy
forces $AL$ or $LB$ to be actually constant: a nowhere-vanishing
polynomial on $\C$ is constant. 
Polynomial and rational $ALB$-data are especially natural from the
viewpoint of geometric modelling and Pythagorean-hodograph techniques;
see \cite{AltavillaSchroeckerSirVrsek2025}.

\subsection{Stabilizers of the null vector}
\label{subsec:stabilizers-null-vector}

The $ALB$-representation is not unique: even after fixing $L=1+\ii e_1$,
non-trivial stabiliser freedoms remain on both sides of $L$. These are
important in the polynomial setting, because non-constant polynomial
factors may be invisible in the product $ALB$.

With $N_\pm$ as in \Cref{subsec:ODE-parametrization}, one has
$N_-L=LN_+=0$ and $N_-^2=N_+^2=0$. For any holomorphic function $p$,
the elements $U_p:=1+pN_-$ and $V_p:=1+pN_+$ therefore fix $L$ on the
left and right respectively.

\begin{proposition}
\label{prop:stabilizer-freedom}
Let $L=1+\ii e_1$. For every holomorphic function $p$, the maps
\[
  U_p:=1+p(e_3-\ii e_2),\qquad V_p:=1+p(e_3+\ii e_2)
\]
take values in $\Sp(1,\C)$ and satisfy $U_pL=L$ and $LV_p=L$.
Consequently, if $\Phi=ALB$, then also
\[
  \Phi=(AU_p)LB\qquad\text{and}\qquad\Phi=AL(V_pB).
\]
\end{proposition}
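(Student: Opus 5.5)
The plan is a direct algebraic verification in three short steps, relying only on the identities $N_-L=LN_+=0$ and $N_\pm^2=0$ recorded in \Cref{subsec:ODE-parametrization}, together with multiplicativity of the symmetrised norm.

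\emph{Step 1: unit norm.} Since $N_\pm$ are pure, $N_\pm^c=-N_\pm$, so $U_p^c=1-pN_-$. Using $N_-^2=0$,
\[
  U_p^s=U_pU_p^c=(1+pN_-)(1-pN_-)=1-p^2N_-^2=1.
\]
The same computation gives $V_p^s=1$. Alternatively, $N_\pm^s=(e_3\mp\ii e_2)^s=1+(\mp\ii)^2=0$ and $\re_\Hbb(U_p)=1$ give $U_p^s=1+p^2N_-^s=1$. In either form, $U_p$ and $V_p$ are holomorphic (they are affine in $p$) and take values in $\Sp(1,\C)$.

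\emph{Step 2: stabilisation of $L$.} This is the only place needing an actual product computation, namely confirming $N_-L=0$ and $LN_+=0$. For $N_-L$, expand
\[
  (e_3-\ii e_2)(1+\ii e_1)=e_3-\ii e_2+\ii e_3e_1+e_2e_1 = e_3-\ii e_2+\ii e_2-e_3=0,
\]
using $e_3e_1=e_2$ and $e_2e_1=-e_3$. For $LN_+$, expand
\[
  (1+\ii e_1)(e_3+\ii e_2)=e_3+\ii e_2+\ii e_1e_3-e_1e_2=e_3+\ii e_2-\ii e_2-e_3=0,
\]
using $e_1e_3=-e_2$ and $e_1e_2=e_3$. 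Then $U_pL=L+pN_-L=L$ and $LV_p=L+pLN_+=L$.

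\emph{Step 3: consequence.} Substitute into the representation to get $(AU_p)LB=A(U_pL)B=ALB=\Phi$ and $AL(V_pB)=(LV_p)$ placed between $A$ and $B$, i.e.\ $A(LV_p)B=ALB=\Phi$. By Step 1 and multiplicativity of the symmetrised norm, $AU_p$ and $V_pB$ are again holomorphic maps into $\Sp(1,\C)$, so both expressions are genuine $ALB$-representations in the sense of \Cref{def:ALB-representation}.

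There is no real obstacle. The only care needed is the sign bookkeeping in the quaternion products of Step 2, which I would cross-check against the expansions of $\gamma L$ and $L\gamma$ in \Cref{rem:intrinsic-pL}. Taking $\gamma=N_-$, i.e.\ $a=0$, $b=-\ii$, $c=1$, those formulas give $b+\ii c=0$ and $c-\ii b=0$, so $N_-L=0$, consistent with Step 2.
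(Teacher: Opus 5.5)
Your proposal is correct and follows essentially the same route as the paper: verify $N_-L=LN_+=0$ and $N_\pm^2=0$ directly. From these, $U_p^s=(1+pN_-)(1-pN_-)=1$, $V_p^s=1$, $U_pL=L$ and $LV_p=L$ follow, and substituting into $ALB$ gives the two rewritten representations. Your extra remark that $AU_p$ and $V_pB$ remain holomorphic $\Sp(1,\C)$-valued maps, by multiplicativity of the symmetrised norm, is a useful addition that the paper leaves implicit.
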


\begin{proof}
Let $N_-:=e_3-\ii e_2$ and $N_+:=e_3+\ii e_2$. We compute
\[
  N_-L
  =
  (e_3-\ii e_2)(1+\ii e_1)
  =
  e_3-\ii e_2+\ii e_3e_1+e_2e_1
  =
  e_3-\ii e_2+\ii e_2-e_3
  =
  0.
\]
Similarly,
\[
  LN_+
  =
  (1+\ii e_1)(e_3+\ii e_2)
  =
  e_3+\ii e_2+\ii e_1e_3-e_1e_2
  =
  e_3+\ii e_2-\ii e_2-e_3
  =
  0.
\]
Furthermore,
\[
  N_-^2=(e_3-\ii e_2)^2=0,\qquad N_+^2=(e_3+\ii e_2)^2=0.
\]
Hence
\[
  U_p^s
  =
  (1+pN_-)(1-pN_-)
  =
  1-p^2N_-^2
  =
  1,
\]
and similarly $V_p^s=1$, so $U_p,V_p\in\Sp(1,\C)$.

Finally,
\[
  U_pL=(1+pN_-)L=L+pN_-L=L
  \qquad\text{and}\qquad
  LV_p=L(1+pN_+)=L+pLN_+=L,
\]
and therefore
\[
  (AU_p)LB=A(U_pL)B=ALB
  \qquad\text{and}\qquad
  AL(V_pB)=A(LV_p)B=ALB.\qedhere
\]
\end{proof}

\begin{remark}
\label{rem:stabilizer-invisible-factors}
The factors $U_p$ and $V_p$ belong to the exact stabilisers of $L$
for the left and right actions, so they may be inserted into the
$ALB$-representation without changing $\Phi$ at all. In particular,
non-constant polynomial factors may occur in $A$ or $B$ without
producing any new surface. The geometrically meaningful data is
therefore not the raw pair $(A,B)$, but the projective null curve
$[\Phi]\colon M\to Q^2$ and its spinorial components $[AL]$ and $[LB]$.
\end{remark}

\begin{remark}[Exact versus projective stabilizers]
\label{rem:exact-projective-stabilizers}
The transformations $U_p$ and $V_p$ fix $L$ exactly. One can also
consider the larger projective stabilisers, consisting of elements
preserving the null line $[L]$, i.e., allowing
\[
  UL\in\C^*L\qquad\text{or}\qquad LV\in\C^*L
\]
rather than $UL=L$ or $LV=L$. Their infinitesimal versions are the
Lie algebras $\mathfrak{p}_L^\ell$ and $\mathfrak{p}_L^r$ introduced
in \Cref{rem:intrinsic-pL}. The exact stabilisers are responsible for
gauge redundancy, while the projective stabilisers are responsible for
the superminimality alternatives.
\end{remark}

\subsection{Polynomial normal form}
\label{subsec:polynomial-normal-form}

We now prove the polynomial rigidity statement announced at the
beginning of the section, working throughout in the gauge $L=1+\ii e_1$.

\begin{proposition}[Polynomial normal form in a fixed-null gauge]
\label{prop:polynomial-normal-form}
Let $A,B\colon\C\to\HC$ be polynomial maps satisfying $A^s=B^s=1$,
set $\Phi=ALB$ with $L=1+\ii e_1$, and assume $X=\re\int\Phi\,\dz$
is superminimal. Then, modulo the exact stabiliser freedom of $L$,
the null curve admits one of the two representations
\begin{equation}
\label{eq:polynomial-normal-form}
  \Phi=A_0\,L\,B(z)\qquad\text{or}\qquad\Phi=A(z)\,L\,B_0,
\end{equation}
where $A_0\in\Sp(1,\C)$ or $B_0\in\Sp(1,\C)$ is constant. More
precisely: if $\alpha_2+\ii\alpha_3\equiv0$, then $A$ may be replaced,
without changing $\Phi$, by a constant factor $A_0$; and if
$\beta_2-\ii\beta_3\equiv0$, then $B$ may be replaced by a constant
$B_0$.
\end{proposition}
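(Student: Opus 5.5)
By \Cref{thm:superminimal-ALB-normal-form}, on the connected domain $\C$ at least one of $\alpha_2+\ii\alpha_3\equiv0$ and $\beta_2-\ii\beta_3\equiv0$ holds. We treat the left alternative; the right one is symmetric, exchanging the roles of $A$ and $B$, of $N_-$ and $N_+$, and of left and right multiplication.

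\textbf{Step 1: $AL$ is constant.} By \Cref{prop:AL-LB-geometric-meaning}, $[AL]$ is constant. Hence there is a fixed non-zero $L_1\in\C^4$ and a scalar function $\lambda$ with $AL=\lambda L_1$.

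Since $A$ is polynomial, $AL$ is polynomial. Pick a coordinate $j$ with $(L_1)_j\neq0$. Then $\lambda=(AL)_j/(L_1)_j$ is a polynomial in $z$.

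Moreover $\lambda$ never vanishes, because $A$ is invertible and $L\neq0$, so $AL\neq0$. A zero-free polynomial on $\C$ is constant, so $\lambda$ is constant and $AL\equiv c$ for a constant null element $c\neq0$. Equivalently, evaluating at any point $z_0$ gives $AL\equiv A_0L$ with $A_0:=A(z_0)\in\Sp(1,\C)$.

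\textbf{Step 2: the replacement is a stabiliser move.} Now $A_0^{-1}A(z)L=L$ for all $z$, so $S:=A_0^cA$ lies in the exact left stabiliser of $L$. Then
\[
  \Phi=ALB=A_0(SL)B=A_0LB.
\]
This already gives the first representation in \Cref{eq:polynomial-normal-form}.

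\textbf{Step 3: identify the stabiliser.} To justify the phrase ``modulo the exact stabiliser freedom of $L$'', we show that the exact left stabiliser of $L$ in $\Sp(1,\C)$ is precisely $\{1+pN_-\}$.

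Under $\Psi$, $L$ corresponds to $2E_{22}$. A matrix $S\in SL(2,\C)$ with $S E_{22}=E_{22}$ has second column $(0,1)^t$. The determinant condition then forces the $(1,1)$-entry to be $1$, so $S$ is unipotent lower triangular, a one-parameter group. Its Lie algebra is the nilpotent part of $\mathfrak{p}_L^\ell$, namely $\C N_-$, so the group is exactly $\{1+pN_-\}$ as in \Cref{prop:stabilizer-freedom}.

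Consequently $A=A_0U_p$ with $p$ polynomial, since it is extracted linearly from the polynomial entries of $A_0^cA$. So $A$ differs from the constant $A_0$ exactly by a stabiliser factor $U_p$.

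\textbf{Right branch and main obstacle.} The right alternative is symmetric: $LB$ is a zero-free polynomial multiple of a fixed vector, hence constant. This gives $LB\equiv LB_0$ and $B=V_pB_0$.

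The main subtlety is Step 1: passing from projective constancy of $[AL]$ to exact constancy of $AL$. This is the only place where polynomiality is used, through the fact that a zero-free entire polynomial is constant. For general holomorphic data this step fails, since $e^{z}$ is a zero-free non-constant example, and only the rescaling by the projective $e_1$-direction of \Cref{rem:ODE-gauge-meaning} would remain.
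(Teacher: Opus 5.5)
Your proposal is correct and follows the paper's argument. Like the paper, you use \Cref{prop:AL-LB-geometric-meaning} to write $AL=\lambda L_1$, note that $\lambda$ is a zero-free polynomial and hence constant, and conclude $\Phi=A_0LB$ with $A_0^cA$ in the exact left stabiliser of $L$. Your Step~3 adds one thing the paper does not spell out: it identifies that stabiliser as $\{1+pN_-\}$ with $p$ polynomial, and this is right, since $\Psi(L)=2E_{22}$ and $\Psi(N_-)=2\ii E_{21}$.
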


\begin{proof}
By \Cref{thm:superminimal-ALB-normal-form}, at least one of
$\alpha_2+\ii\alpha_3\equiv0$ or $\beta_2-\ii\beta_3\equiv0$ holds.

Assume $\alpha_2+\ii\alpha_3\equiv0$. By
\Cref{prop:AL-LB-geometric-meaning}, the projective map
\[
  [AL]\colon\C\to\CP^3
\]
is constant. Hence there exist a fixed non-zero null vector $C\in\HC$
and a holomorphic function $h\colon\C\to\C$ with $AL=hC$. Since $A$
is polynomial and $L$ is constant, $AL$ has polynomial components, so
$h$ is polynomial. Moreover, $h$ has no zeros: if $h(z_0)=0$ then
$A(z_0)L=0$, impossible since $A(z_0)\in\Sp(1,\C)$ is invertible and
$L\neq0$. A nowhere-vanishing polynomial on $\C$ is constant, so $AL$
is constant. Setting $A_0:=A(z_0)$ for any $z_0\in\C$, we have
$A_0L=AL$ everywhere, hence $\Phi=ALB=A_0LB$. Writing $U:=A_0^cA$,
\[
  UL = A_0^cAL = A_0^cA_0L = L,
\]
so $U$ takes values in the exact left stabiliser of $L$; the
non-constant part of $A$, if any, is pure stabiliser freedom and is
invisible in $\Phi$.

The proof of the right alternative is completely analogous.
\end{proof}

\begin{remark}
\label{rem:polynomial-rigidity-meaning}
The proposition is a statement modulo stabiliser freedom: a polynomial
factor $A$ may be non-constant even when $AL$ is constant, with its
non-constant part lying in the exact left stabiliser of $L$ and hence
invisible in $\Phi$. The geometrically meaningful datum is the
projective component $[AL]$ (or $[LB]$), which superminimality forces
to be constant.
\end{remark}

\section{Examples}
\label{sec:examples}

We now illustrate the $ALB$-representation by constructing explicit
superminimal surfaces in $\R^4$, working in the fixed-null gauge
$L=1+\ii e_1$. We use the notation
\[
  \eta_-:=e_2-\ii e_3,
  \qquad
  \eta_+:=e_2+\ii e_3,
\]
complementing $N_\pm$ from \Cref{subsec:ODE-parametrization}. One
verifies that $\eta_\pm^2=\eta_\pm^s=0$, $\eta_-L=2\eta_-$, and
$L\eta_+=2\eta_+$, while $N_-L=LN_+=0$, so $N_\pm$ generate exact
stabiliser directions.

\subsection{The two normal-form families}
\label{subsec:normal-form-families}

The classical local normal form of a superminimal null curve is
\[
  \Phi=\mu(1,\ii,g,-\ii g)
  \qquad\text{or}\qquad
  \Phi=\mu(1,\ii,g,\ii g),
\]
where $\mu$ is holomorphic and nowhere zero and $g$ is holomorphic;
see \cite{Bryant1982,DajczerTojeiro2009}. We show how both forms arise
directly from the $ALB$-representation.

Let $M\subset\C$ be simply connected, let $\mu\colon M\to\C^*$ and
$g\colon M\to\C$ be holomorphic. Choose $F$ such that $\mu=e^{-\ii F}$,
and set $R_F:=\cos F+\sin F\,e_1$, so that $R_F^s=1$ and
\[
  R_FL = LR_F = e^{-\ii F}L = \mu L.
\]
Define
\[
  A_g^-:=1+\tfrac{g}{2}\eta_-,
  \qquad
  B_g^+:=1+\tfrac{g}{2}\eta_+;
\]
since $\eta_\pm^2=0$ we have $(A_g^-)^s=(B_g^+)^s=1$.

\smallskip
\noindent\textit{Positive-sign branch.}
Take $A=R_F$ and $B=B_g^+$. Then
$\Phi=ALB=R_FLB_g^+=\mu LB_g^+$.
Using $L\eta_+=2\eta_+$,
\[
  \Phi = \mu(L+g\eta_+) = \mu(1,\ii,g,\ii g),
\]
so $X^+=\re\int\mu(1,\ii,g,\ii g)\,\dz$ is a superminimal conformal
minimal immersion.

\smallskip
\noindent\textit{Negative-sign branch.}
Take $A=A_g^-$ and $B=R_F$. Then
$\Phi=ALB=A_g^-LR_F=\mu A_g^-L$.
Using $\eta_-L=2\eta_-$,
\[
  \Phi = \mu(L+g\eta_-) = \mu(1,\ii,g,-\ii g),
\]
so $X^-=\re\int\mu(1,\ii,g,-\ii g)\,\dz$ is again superminimal.

\begin{remark}
\label{rem:normal-form-families-meaning}
These two constructions recover the classical superminimal branches.
In the positive branch the left spinorial component is projectively
constant:
\[
  [AL]=[R_FL]=[L].
\]
In the negative branch the right spinorial component is projectively
constant:
\[
  [LB]=[LR_F]=[L].
\]
Thus the two $ALB$-alternatives correspond exactly to the two rulings
of $Q^2\simeq\CP^1\times\CP^1$.
\end{remark}

\subsection{Elementary polynomial and transcendental examples}
\label{subsec:elementary-examples}

We now specialise the preceding normal forms.

\begin{example}[Polynomial superminimal surfaces of degree $d$]
\label{ex:polynomial-degree-d}
Take $\mu\equiv1$ and $g(z)=z^{d-1}$ with $d\geq1$. Using the
negative-sign branch gives $\Phi^-=(1,\ii,z^{d-1},-\ii z^{d-1})$,
hence
\[
  X^-(z)
  =
  \re\!\left(z,\;\ii z,\;\frac{z^d}{d},\;-\frac{\ii z^d}{d}\right).
\]
For $d=1$, $g$ is constant, the generalised Gauss map is constant, and
the surface is a plane. For $d=2$,
\[
  X^-(u,v)
  =
  \left(u,\;-v,\;\frac{u^2-v^2}{2},\;uv\right),
\]
the real surface associated with the holomorphic curve
$z\mapsto(z,z^2/2)\in\C^2\simeq\R^4$.
\end{example}

\begin{example}[An exponential superminimal surface]
\label{ex:exponential}
Take $\mu\equiv1$ and $g(z)=e^z$. The negative-sign branch gives
$\Phi^-=(1,\ii,e^z,-\ii e^z)$, and
\[
  X^-(u,v)
  =
  \bigl(u,\;-v,\;e^u\cos v,\;e^u\sin v\bigr).
\]
This is a non-polynomial superminimal conformal minimal immersion.
\end{example}

\subsection{Both spinorial factors non-constant}
\label{subsec:both-factors-nonconstant}

The examples above may be written with one spinorial factor equal to
$1$. We now show that this is not necessary: both $A$ and $B$ may be
non-constant while one projective spinorial component remains constant.

\begin{example}[Both factors non-constant, polynomial Gauss coordinate]
\label{ex:both-factors-nonconstant}

Take $F(z)=z$, $\mu(z)=e^{-\ii z}$, and $g(z)=z$. Use the
positive-sign normal form:
\[
  A(z)=\cos z+\sin z\,e_1,
  \qquad
  B(z)=1+\tfrac{z}{2}\eta_+.
\]
We have
$$
\begin{aligned}
  A(z)^s
  &=
  (\cos z+\sin z\,e_1)(\cos z-\sin z\,e_1) \\
  &=
  \cos^2 z+\sin^2 z =
  1.
\end{aligned}
$$
Moreover, $\eta_+^2=0$, and hence
$$
\begin{aligned}
  B(z)^s
  &=
  \left(1+\frac{z}{2}\eta_+\right)
  \left(1-\frac{z}{2}\eta_+\right) \\
  &=
  1-\frac{z^2}{4}\eta_+^2 =
  1.
\end{aligned}
$$

Then $A^s=B^s=1$
and $A,B\colon\C\to\Sp(1,\C)$. 
Next,
$$
\begin{aligned}
  A(z)L
  &=
  (\cos z+\sin z\,e_1)(1+\ii e_1) \\
  &=
  \cos z+\ii\cos z\,e_1+\sin z\,e_1+\ii\sin z\,e_1^2 \\
  &=
  \cos z-\ii\sin z
  +
  (\ii\cos z+\sin z)e_1 \\
  &=
  e^{-\ii z}(1+\ii e_1) \\
  &=
  e^{-\ii z}L.
\end{aligned}$$
and hence
we have $[A(z)L]=[L]$ for every $z$, so the left spinorial component
of the generalised Gauss map is constant.

We also have that
$$
\begin{aligned}
  L\eta_+
  &=
  (1+\ii e_1)(e_2+\ii e_3) \\
  &=
  e_2+\ii e_3+\ii e_1e_2+\ii^2 e_1e_3 \\
  &=
  e_2+\ii e_3+\ii e_3+e_2 \\
  &=
  2(e_2+\ii e_3) =
  2\eta_+.
\end{aligned}
$$
Therefore
$$
\begin{aligned}
  \Phi
  &=
  ALB
  =
  e^{-\ii z}L\left(1+\frac{z}{2}\eta_+\right) \\
  &=
  e^{-\ii z}\left(L+\frac{z}{2}L\eta_+\right) \\
  &=
  e^{-\ii z}(L+z\eta_+) \\
  &=
  e^{-\ii z}(1,\ii,z,\ii z).
\end{aligned}
$$
Finally,
$$
  \frac{\dd}{\dd z}\bigl(\ii e^{-\ii z}\bigr)=e^{-\ii z},
  \qquad
  \frac{\dd}{\dd z}\bigl(-e^{-\ii z}\bigr)=\ii e^{-\ii z},
$$
and
$$
\begin{aligned}
  \frac{\dd}{\dd z}\bigl(e^{-\ii z}(1+\ii z)\bigr)
  &=
  -\ii e^{-\ii z}(1+\ii z)+\ii e^{-\ii z} =
  z e^{-\ii z}.
\end{aligned}
$$
Thus a primitive of $\Phi$ is
$$
  \bigl(
    \ii e^{-\ii z},\;
    -e^{-\ii z},\;
    e^{-\ii z}(1+\ii z),\;
    \ii e^{-\ii z}(1+\ii z)
  \bigr).
$$
Finally, in order to compute the real and imaginary part, for $z=u+\ii v$, we have that
$
e^{-iz}=e^{-\ii(u+iv)}=e^v\left(\cos u-\ii\sin u\right)
$, hence  $\ii e^{-\ii z}=e^v(\sin u+\ii \cos u)$,
therefore 
$$
\begin{aligned}
    e^{-\ii z}(1+\ii z)&=e^v(\cos u- \ii \sin u)(1-v+\ii u)\\
    &=e^v(\cos u-v\cos u+\ii u \cos u-\ii\sin u+\ii v \sin u +u\sin u)\\
    &=e^v\left((1-v)\cos u+u\sin u+\ii(u\cos u-(1-v)\sin u)\right).
\end{aligned}
$$
We eventually conclude that
\[
  X(u,v)
  =
  e^v
  \begin{pmatrix}
    \sin u\\[2pt]
    -\cos u\\[2pt]
    (1-v)\cos u+u\sin u\\[2pt]
    -u\cos u+(1-v)\sin u
  \end{pmatrix}.
\]
 is a superminimal conformal minimal immersion. Although both
spinorial factors are non-constant, $A$ lies in the projective
stabiliser of $[L]$ (since $A(z)L=e^{-\ii z}L$) but not in the exact
stabiliser of $L$.
\end{example}

\begin{remark}
\label{rem:both-factors-projective-data}
The null curve of \Cref{ex:both-factors-nonconstant} has the same
projective Gauss map as the elementary curve $(1,\ii,z,\ii z)$, since
the two differ only by the scalar $e^{-\ii z}$. The corresponding
minimal surfaces are nevertheless different, as the primitives change.
This illustrates that the projective Gauss data control superminimality,
while the scalar factor $\mu$ affects the actual immersion.
\end{remark}

\begin{example}[Both factors non-constant and non-polynomial]
\label{ex:both-factors-nonpolynomial}
Take $F(z)=z$, $\mu(z)=e^{-\ii z}$, and $g(z)=e^z$. The positive-sign
branch gives
\[
  A(z)=\cos z+\sin z\,e_1,
  \qquad
  B(z)=1+\tfrac{e^z}{2}\eta_+.
\]
Since $A(z)L=e^{-\ii z}L$, we have
$\Phi=ALB=e^{-\ii z}(L+e^z\eta_+)$, with coordinates
\[
  \Phi
  =
  \bigl(e^{-\ii z},\;\ii e^{-\ii z},\;e^{(1-\ii)z},\;\ii e^{(1-\ii)z}\bigr).
\]
A primitive is
$(\ii e^{-\ii z},\,-e^{-\ii z},\,e^{(1-\ii)z}/(1-\ii),\,\ii e^{(1-\ii)z}/(1-\ii))$,
so for $z=u+\ii v$,
\[
  X(u,v)
  =
  \Bigl(
    e^v\sin u,\;
    -e^v\cos u,\;
    \tfrac{e^{u+v}}{2}\bigl(\cos(v-u)-\sin(v-u)\bigr),\;
    -\tfrac{e^{u+v}}{2}\bigl(\cos(v-u)+\sin(v-u)\bigr)
  \Bigr).
\]
\end{example}

\begin{remark}
\label{rem:nonpolynomial-not-polynomial}
The previous example is not reducible to a polynomial example by an
affine change of parameter $z\mapsto az+b$: its projective Gauss
coordinate $g(z)=e^z$ has an essential singularity at $z=\infty$,
whereas polynomial examples have Gauss data meromorphic at infinity.
Thus, with the global parameter $z\in\C$ fixed, this example is
genuinely non-polynomial. (Under arbitrary local holomorphic
reparametrizations, polynomiality is not a meaningful invariant.)
\end{remark}


\begin{thebibliography}{99}

\bibitem{AlarconForstnericLarusson2021}
A. Alarc{\'o}n, F. Forstneri{\v c} and F. L{\'a}russon,
\emph{Holomorphic Legendrian Curves in $\mathbb{CP}^3$ and Superminimal Surfaces in $\mathbb S^4$},
Geometry \& Topology
\textbf{25} (2021), no. 7, 3507--3553.
\href{https://doi.org/10.2140/gt.2021.25.3507}{doi: 10.2140/gt.2021.25.3507}.

\bibitem{AlarconForstnericLopez2021}
A. Alarc{\'o}n, F. Forstneri{\v c} and F. J. L{\'o}pez,
\emph{Minimal Surfaces from a Complex Analytic Viewpoint},
Springer Monographs in Mathematics,
Springer, 2021.
\href{https://doi.org/10.1007/978-3-030-69056-4}{doi: 10.1007/978-3-030-69056-4}.

\bibitem{AltavillaSchroeckerSirVrsek2025}
A. Altavilla, H.-P. Schr{\"o}cker, Z. {\v S}{\'i}r and J. Vr{\v s}ek,
\emph{Minimal Surfaces via Complex Quaternions},
arXiv:2504.17377 [math.CV], 2025.
Accepted for publication in SIAM Journal on Applied Algebra and Geometry.

\bibitem{AHS}
M. F. Atiyah, N. J. Hitchin and I. M. Singer,
\emph{Self-duality in four-dimensional Riemannian geometry},
Proceedings of the Royal Society of London. Series A, Mathematical and Physical Sciences
\textbf{362} (1978), no. 1711, 425--461.
\href{https://doi.org/10.1098/rspa.1978.0143}{doi: 10.1098/rspa.1978.0143}.

\bibitem{Bryant1982}
R. L. Bryant,
\emph{Conformal and minimal immersions of compact surfaces into the $4$-sphere},
Journal of Differential Geometry
\textbf{17} (1982), no. 3, 455--473.
\href{https://doi.org/10.4310/jdg/1214437137}{doi: 10.4310/jdg/1214437137}.

\bibitem{BFLPP}
F. E. Burstall, D. Ferus, K. Leschke, F. Pedit and U. Pinkall,
\emph{Conformal Geometry of Surfaces in $S^4$ and Quaternions},
Lecture Notes in Mathematics, vol. 1772,
Springer, Berlin, 2002.
\href{https://doi.org/10.1007/BFb0109498}{doi: 10.1007/BFb0109498}.

\bibitem{ChiFernandezWu1999}
Q.-S. Chi, L. Fern{\'a}ndez and H. Wu,
\emph{Normalized potentials of minimal surfaces in spheres},
Nagoya Mathematical Journal
\textbf{156} (1999), 187--214.

\bibitem{DajczerTojeiro2009}
M. Dajczer and R. Tojeiro,
\emph{All superconformal surfaces in $\mathbb{R}^4$ in terms of minimal surfaces},
Mathematische Zeitschrift
\textbf{261} (2009), no. 4, 869--890.
\href{https://doi.org/10.1007/s00209-008-0355-0}{doi: 10.1007/s00209-008-0355-0}.

\bibitem{EellsSalamon1985}
J. Eells and S. Salamon,
\emph{Twistorial construction of harmonic maps of surfaces into four-manifolds},
Annali della Scuola Normale Superiore di Pisa. Classe di Scienze. Serie IV
\textbf{12} (1985), no. 4, 589--640.


\bibitem{Forster1981}
O. Forster,
\emph{Lectures on Riemann Surfaces},
Graduate Texts in Mathematics, vol. 81,
Springer, New York, 1981.
\href{https://doi.org/10.1007/978-1-4612-5965-9}{doi: 10.1007/978-1-4612-5965-9}.

\bibitem{Forstneric2020CY}
F. Forstneri{\v c},
\emph{The Calabi--Yau Property of Superminimal Surfaces in Self-Dual Einstein Four-Manifolds},
The Journal of Geometric Analysis
\textbf{31} (2021), no. 5, 4754--4780.
\href{https://doi.org/10.1007/s12220-020-00455-6}{doi: 10.1007/s12220-020-00455-6}.

\bibitem{Friedrich1997}
T. Friedrich,
\emph{On superminimal surfaces},
Archivum Mathematicum (Brno)
\textbf{33} (1997), no. 1--2, 41--56.

\bibitem{GuadalupeRodriguez1983}
I. V. Guadalupe and L. Rodriguez,
\emph{Normal curvature of surfaces in space forms},
Pacific Journal of Mathematics
\textbf{106} (1983), no. 1, 95--103.
\href{https://doi.org/10.2140/pjm.1983.106.95}{doi: 10.2140/pjm.1983.106.95}.

\bibitem{GunningRossi1965}
R. C. Gunning and H. Rossi,
\emph{Analytic Functions of Several Complex Variables},
Prentice-Hall, 1965.

\bibitem{HarveyLawson1982}
R. Harvey and H. B. Lawson,
\emph{Calibrated geometries},
Acta Mathematica
\textbf{148} (1982), 47--157.
\href{https://doi.org/10.1007/BF02392726}{doi: 10.1007/BF02392726}.

\bibitem{HoffmanOsserman1980}
D. A. Hoffman and R. Osserman,
\emph{The Geometry of the Generalized Gauss Map},
Memoirs of the American Mathematical Society, vol. 28, no. 236,
American Mathematical Society, Providence, RI, 1980.
\href{https://doi.org/10.1090/memo/0236}{doi: 10.1090/memo/0236}.



\bibitem{Osserman1986}
R. Osserman,
\emph{A Survey of Minimal Surfaces},
2nd ed.,
Dover Publications, New York, 1986.

\bibitem{Wintgen1979}
P. Wintgen,
\emph{Sur l'in{\'e}galit{\'e} de Chen--Willmore},
Comptes Rendus de l'Acad{'e}mie des Sciences de Paris. S{'e}rie A--B
\textbf{288} (1979), no. 21, A993--A995.

\end{thebibliography}
\end{document}